\newtheorem{prop}{Proposition}[section]
\newtheorem{thm}[prop]{Theorem}
\newtheorem{lem}[prop]{Lemma}
\theoremstyle{definition}
\newtheorem*{defn}{Definition}
\newtheorem{ex}[prop]{Example}
\newtheorem{rem}[prop]{Remark}
\newtheorem*{ack}{Acknowledgements}
\def\co{\colon\thinspace}
\newcommand{\oalpha}{\overline{\alpha}}
\newcommand{\alphast}{\alpha_{\mathrm{st}}}
\newcommand{\C}{\mathbb{C}}
\newcommand{\CP}{\mathbb{C}\mathrm{P}}
\newcommand{\rmd}{\mathrm{d}}
\newcommand{\rme}{\mathrm{e}}
\newcommand{\PE}{\mathbb{P}E}
\newcommand{\tH}{\widetilde{H}}
\newcommand{\rmi}{\mathrm{i}}
\newcommand{\K}{\mathbb{K}}
\newcommand{\N}{\mathbb{N}}
\newcommand{\PV}{\mathbb{P}V}
\newcommand{\PP}{\mathbb{P}}
\newcommand{\Q}{\mathbb{Q}}
\newcommand{\R}{\mathbb{R}}
\newcommand{\RP}{\mathbb{R}\mathrm{P}}
\newcommand{\frakR}{\mathfrak{R}}
\newcommand{\xist}{\xi_{\mathrm{st}}}
\newcommand{\Xh}{X_{\mathrm{h}}}
\newcommand{\tX}{\widetilde{X}}
\newcommand{\Z}{\mathbb{Z}}
\DeclareMathOperator{\Int}{Int}
\begin{document}

\author[H.~Geiges]{Hansj\"org Geiges}

\address{Mathematisches Institut, Universit\"at zu K\"oln,
Weyertal 86--90, 50931 K\"oln, Germany}
\email{geiges@math.uni-koeln.de}

\title[Controlled Reeb dynamics]{Lectures on controlled Reeb dynamics}

\date{}

\dedicatory{Dedicated to David Blair on his 78th birthday}

\begin{abstract}
These are notes based on a mini-course at the conference
RIEMain in Contact, held in Cagliari, Sardinia, in June 2018.
The main theme is the connection between Reeb dynamics and topology.
Topics discussed include traps for Reeb flows, plugs for Hamiltonian
flows, the Weinstein conjecture, Reeb flows with finite numbers
of periodic orbits, and global surfaces of section for Reeb flows.
The emphasis is on methods of construction, e.g.\
contact cuts and lifting group actions in Boothby--Wang bundles, that might be
useful for other applications in contact topology.
\end{abstract}

\subjclass[2010]{37J05; 37C27, 37J45, 53D35, 53D20}

\maketitle

\section{Introduction}
One of the driving conjectures in contact topology and Reeb dynamics
is the Weinstein conjecture about the existence of periodic Reeb orbits.
As originally envisaged by Poincar\'e in the context of the
$3$-body problem, finding periodic orbits
may be the first step (in the absence of stationary
solutions) to understanding a dynamical system:
``D'ailleurs, ce qui nous rend
ces solutions p\'eriodiques si pr\'ecieuses, c'est qu'elles sont,
pour ainsi dire, la seule br\`eche par o\`u nous puissions essayer
de p\'en\'etrer dans une place jusqu'ici r\'eput\'ee inabordable.''

The main theme of these lectures are topological constructions that
approach Reeb dynamics from the opposite direction, as it were.
The aim is to build contact manifolds whose Reeb dynamics
has certain desirable features, for instance, a given number of
periodic Reeb orbits, or a global surface of section with a
prescribed Poincar\'e return map. This is what I mean by
controlled Reeb dynamics.

In these lectures I survey some results from joint papers
with Peter Albers, Nena R\"ottgen, and Kai Zehmisch. I have
tried to put the emphasis less on specific results, but rather
on advertising the methods used to attain them. I believe that
some of the contact topological constructions I present may
turn out to be useful in other settings.

I am mostly concerned with contact topology in higher dimensions,
meaning at least five. An example of controlled Reeb dynamics
in dimension three is a paper by Colin and Honda~\cite{coho05},
where the authors construct contact forms without any contractible
periodic Reeb orbits, so-called hypertight contact forms,
on closed, orientable, irreducible, toroidal $3$-manifolds.

These notes are based essentially on three of my five lectures
in Cagliari. The other two lectures were concerned more directly
with topological aspects (surgery, cobordisms,...) of the Weinstein
conjecture. However, I felt that the material
on contact surgery is amply covered in~\cite{geig08}, and
the topics I discussed in my final lecture are well served by~\cite{geze13}.
\section{Traps and plugs in symplectic dynamics}
Starting from some simple and explicit examples of flows
on the $3$-sphere~$S^3$, I discuss the Seifert conjecture
about the existence of periodic orbits in any dynamical system
on~$S^3$. The attempts to disprove the conjecture have led to
the construction of plugs: local models of aperiodic flows that
allow one to break isolated periodic orbits in a given system.

For Reeb flows, such plugs cannot exist, but something slightly
weaker, what we call traps, does. I then cast my net a little
wider and include Hamiltonian flows, for which plugs can in fact be
constructed. I explain one such plug that is built from a Reeb trap.
\subsection{Flows on the $3$-sphere}
\label{subsection:flowS3}
Think of the $3$-sphere as the union of $\R^3$ with a point at
infinity, $S^3=\R^3\cup\{\infty\}$. In turn, we visualise $\R^3$ as
being obtained by rotating the drawing plane about a vertical
axis. Thus, the two points shown in Figure~\ref{figure:hopf},
symmetric with respect to the axis of rotation, actually represent
a circle. The axis of rotation, as it passes through the point
at infinity, likewise constitutes a circle in $S^3$.
Each pair of circles symmetric with respect to the axis
represents a $2$-torus. Figure~\ref{figure:hopf} therefore illustrates
how to decompose $S^3$ into two circles and an infinite family of $2$-tori.
If we regard $S^3$ as the unit sphere in $\C^2$,
\[ S^3=\bigl\{(z_1,z_2)\in\C^2\co |z_1|^2+|z_2|^2=1\bigr\},\]
these so-called Hopf tori are defined by
\[ T^2_r=\bigl\{ (z_1,z_2)\in S^3\co |z_1|=r\bigr\},\;\;r\in ]0,1[;\]
the two circles $C_1,C_2$ are defined by $C_j=S^3\cap\{z_j=0\}$.

\begin{figure}[h]
\labellist
\small\hair 2pt
\pinlabel $C_2$ [bl] at 425 398
\pinlabel $T^2$ [bl] at 577 408
\pinlabel $C_1$ [tl] at 546 54
\endlabellist
\centering
\includegraphics[scale=0.45]{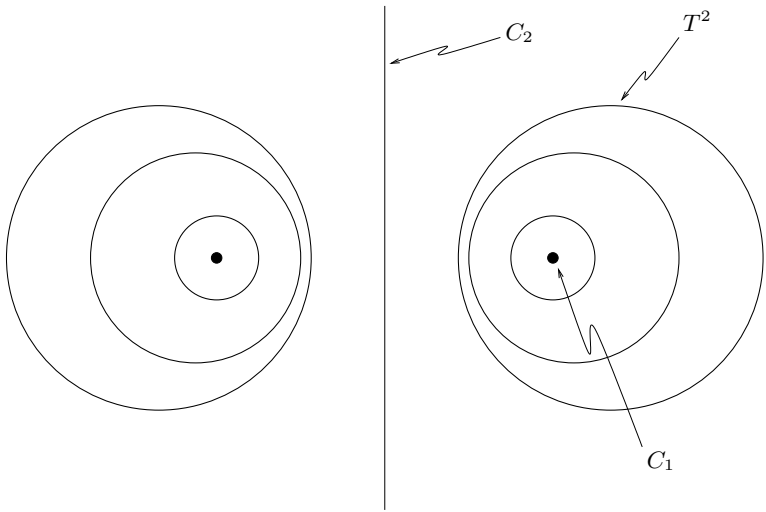}
  \caption{Decomposition of $S^3$ into Hopf tori and two circles.}
  \label{figure:hopf}
\end{figure}

Figure~\ref{figure:torus-circles} illustrates how to foliate
each Hopf torus by circles going once around each $S^1$-factor
in $T^2=S^1\times S^1$. Notice that as $r\rightarrow 0$ or
$r\rightarrow 1$, these $(1,1)$-circles approach the
two circles $C_1,C_2$. This defines a foliation of $S^3$ by circles
where any two circles form a Hopf link, see Figure~\ref{figure:hopf-link}.
Observe that each of these circles, with the exception of~$C_2$,
intersects the open half-plane on the right of Figure~\ref{figure:hopf}
in a single point. This half-plane, together with $C_2$,
constitutes a closed $2$-disc. Thus, the leaf space of this foliation
(or the orbit space, if we think of the circles as flow lines)
is a $2$-disc with its boundary collapsed to a point, in other words:
a $2$-sphere.

\begin{figure}[h]
\centering
\includegraphics[scale=0.25]{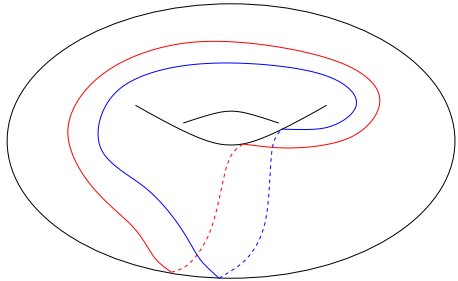}
  \caption{Foliation of $T^2$ by $(1,1)$-circles.}
  \label{figure:torus-circles}
\end{figure}

\begin{figure}[h]
\centering
\includegraphics[scale=0.3]{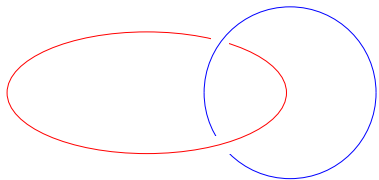}
  \caption{A Hopf link in $S^3$.}
  \label{figure:hopf-link}
\end{figure}

What we have described here is the topological visualisation
of the foliation given by the fibres of the Hopf fibration
\[ \begin{array}{ccc}
\C^2\supset S^3 & \longrightarrow & \CP^1=S^2\\
(z_1,z_2)       & \longmapsto     & [z_1:z_2].
\end{array}\]
More simply (and dynamically), we can think of the circles in this
foliation as the orbits of the vector field $\partial_{\varphi_1}+
\partial_{\varphi_2}$, where $\varphi_j$ denotes the angular
coordinate in the $z_j$-plane.

When we perturb the $(1,1)$-foliation on each Hopf torus
into a foliation by lines of irrational slope (close to~$1$),
all leaves of the foliation but the two Hopf circles $C_1,C_2$
open up. Analytically, this corresponds to passing to the
vector field $\partial_{\varphi_1}+(1+\varepsilon)\partial_{\varphi_2}$
with $\varepsilon\in\R\setminus\Q$ close to~$0$. The only orbits of this
vector field that close up are the $C_j=S^3\cap\{z_j=0\}$, $j=1,2$.

Can we find a flow on $S^3$ with only a single periodic orbit?
Figure~\ref{figure:index2} shows a flow on the $2$-sphere with
a single fixed point of index~$2$. Using the Hopf fibration,
we can lift the vector field defining this flow to
a vector field on $S^3$ orthogonal to the Hopf vector
field $\partial_{\varphi_1}+\partial_{\varphi_2}$. The sum of these two
vector fields then defines a flow with only one periodic orbit: the
Hopf fibre over the singularity on~$S^2$.

\begin{figure}[h]
\centering
\includegraphics[scale=0.25]{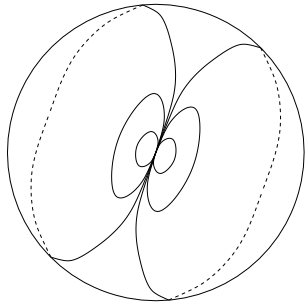}
  \caption{A flow on $S^2$ with a single index $2$ singularity.}
  \label{figure:index2}
\end{figure}

Does every non-singular flow on $S^3$ have a periodic orbit?
This question was posed by Seifert~\cite{seif50} in 1950, and the positive
answer to the question became known as the Seifert conjecture, even though
Seifert did not commit himself either way. For vector fields of class~$C^1$,
the Seifert conjecture was disproved by Schweitzer~\cite{schw74} in 1974.
In 1994, K.~Kuperberg~\cite{kupe-k94} constructed a non-singular vector
field of class $C^{\infty}$ on $S^3$ without any periodic orbits.
Her construction used a modification of what is known
as Wilson's \emph{plug}~\cite{wils66}, a concept I am going to
describe next.
\subsection{Traps and plugs}
By the flow box theorem (or tubular flow theorem),
see~\cite[Theorem~2.1.1]{pame82}, the flow of any non-singular vector field
on an $m$-dimensional manifold locally looks like $(x,s)\mapsto (x,s+t)$
on $D^{m-1}\times[0,1]$, i.e.\ the flow lines are $\{x\}\times [0,1]$.
Traps and plugs are local models of flows that can be inserted in
place of such a flow box.

\begin{defn}
A \emph{trap} (Figure~\ref{figure:trap})
is a non-singular flow on $D^{m-1}\times [0,1]$ with
the following properties:
\begin{enumerate}[widest=(iii)]
\item[(i)] the flow is aperiodic, i.e.\ there are no periodic orbits;
\item[(ii)] there is an orbit entering at $D^{m-1}\times\{0\}$
that does not leave;
\item[(iii)] the flow is parallel to $[0,1]$ along the boundary of
$D^{m-1}\times [0,1]$.
\end{enumerate}
\end{defn}

The trap contains an aperiodic invariant set; an orbit asymptotic
in forward time to this invariant set is trapped.

\begin{figure}[h]
\centering
\includegraphics[scale=0.4]{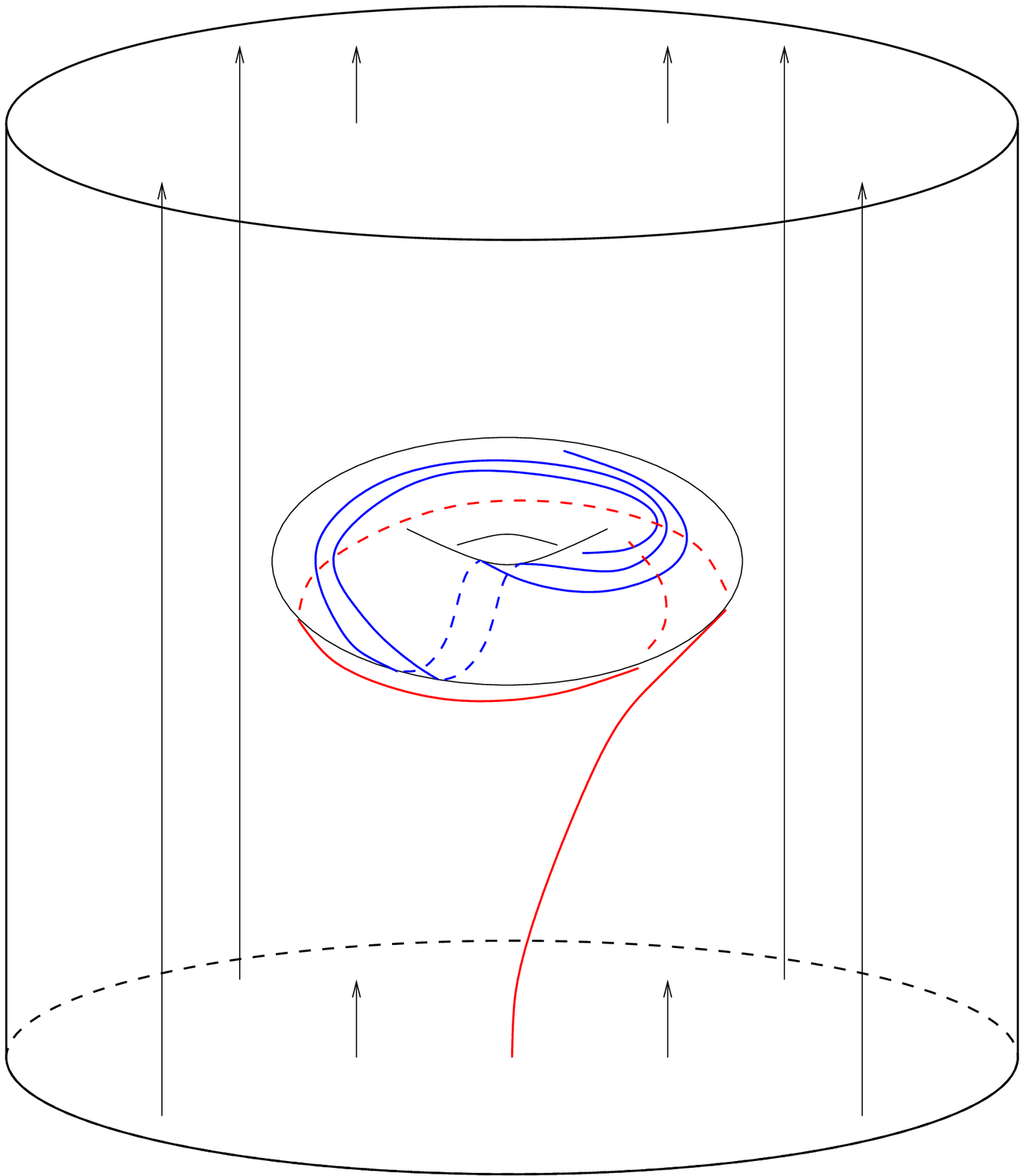}
  \caption{A trap.}
  \label{figure:trap}
\end{figure}

Inserting a trap in place of the original flow box allows one
to open up an isolated periodic orbit in the original flow.
However, an orbit entering at $(x,0)$ and passing through the trap
will in general exit at some point $(x',1)$ with $x'\neq x$.
This means that we lose control over the global dynamics.

\begin{defn}
A \emph{plug} is a trap with matching condition: for any $x\in D^{m-1}$,
the orbit entering at $(x,0)$ is either trapped or exits at $(x,1)$. 
\end{defn}

By putting two traps in sequence, one being the mirror image of the
other with reversed flow direction, one can create a plug,
see Figure~\ref{figure:trap-double}. Of course, there may well
be plugs that do not come from doubling a trap.

\begin{figure}[h]
\centering
\includegraphics[scale=0.4]{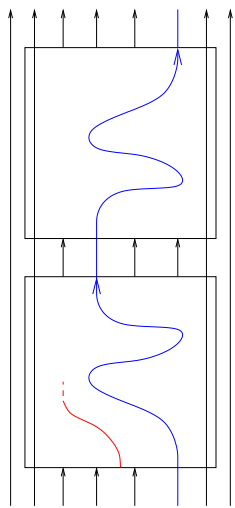}
  \caption{Doubling a trap yields a plug.}
  \label{figure:trap-double}
\end{figure}

Inserting a plug in place of
a flow box will not affect the behaviour of the orbits
traversing the plug. Thus, with the help of a plug, any
dynamical system with isolated periodic orbits can be turned
into a flow without any periodic orbits.

Finding a trap or a plug should become easier with increasing
dimension. For instance, one could imagine taking a $2$-torus
with an irrational flow as aperiodic invariant set inside the
trap. In three dimensions, however, this would force the
existence of a periodic orbit in the interior of the $2$-torus.

Also, if one is interested in flows preserving some
geometric structure, it may well be possible to find a trap,
but doubling it may be impossible since this involves
reversing the flow direction, which the geometric structure
might obstruct.

Here is a list of plugs for certain geometric flows. The notion of
Hamiltonian and Reeb flows will be introduced presently. The
smoothness class refers to that of the vector field defining the flow.
See also the survey~\cite{kupe-k99} and the introduction to~\cite{grz16}.

\begin{itemize}
\item Wilson~\cite{wils66}: volume-preserving, $C^{\infty}$, $\dim\geq 4$ 
\item G.~Kuperberg~\cite{kupe-g96}: volume-preserving, $C^1$,
$\dim =3$.
\item Ginzburg~\cite{ginz97}, Herman~\cite{herm99}, Kerman~\cite{kerm02},
Geiges--R\"ottgen--Zehmisch~\cite{grz16}: Hamiltonian, $C^{\infty}$,
$\dim\geq 5$ (here the dimension refers to that of the energy hypersurface).
\item Ginzburg--G\"urel~\cite{gigu03}: Hamiltonian, $C^1$, $\dim=3$.
\end{itemize}

Wilson's plug and the Hamiltonian plug constructed in \cite{grz16}
involve a doubling construction. I shall present a few more details
of the latter in Section~\ref{subsection:trap-plug}.
\subsection{Hamiltonian and Reeb flows}
We have the following hierarchy of geometric flows:
\begin{eqnarray*}
\lefteqn{\text{volume-preserving} \supset \text{symplectic}\supset
\text{Hamiltonian} \supset  \text{Reeb} \supset}\\
 & \supset &  \text{Finsler geodesic} \supset \text{Riemannian geodesic}.
\end{eqnarray*}
The first three inclusions I shall explain presently, for the last two
see~\cite{dgz17} and~\cite[Section~1.5]{geig08}. This is one
potential motivation for the study of Hamiltonian and Reeb flows.
For instance, a statement about the existence of periodic
orbits for Reeb flows on unit cotangent bundles~\cite{hovi88}
is, \emph{a fortiori}, a statement about
closed geodesics in Finsler or Riemannian geometry~\cite{lyfe51}.
Another example in this vein is~\cite{fls15}.

\subsubsection{Hamiltonian flows}
Let $(W,\omega)$ be a $2n$-dimensional \emph{symplectic manifold},
that is, $\omega$ is a closed, non-degenerate $2$-form on~$W$.
Given a smooth function $H\co W\rightarrow\R$, the
\emph{Hamiltonian vector field} $X_H$ is defined by
\[ \omega(X_H,\,.\,)=\rmd H.\]

\begin{rem}
The letter $H$ actually stands for `Huygens', see~\cite[p.~187]{geig16}.
\end{rem}

Observe that $\rmd H(X_H)=0$, so the flow of $X_H$ is tangent
to the level sets of~$H$. In classical mechanics, $H$ is typically
the total energy, and the Hamiltonian flow describes
the dynamics of the system. Then the statement $\rmd H(X_H)=0$ means
conservation of energy.

Moreover, the Lie derivative of $\omega$ in the direction
of $X_H$ is, by Cartan's formula,
\[ L_{X_H}\omega=\rmd(i_{X_H}\omega)+i_{X_H}\rmd\omega=\rmd^2H=0.\]
In particular, the Hamiltonian flow preserves the volume form $\omega^n$
on what, in classical mechanics, is the phase space of the system;
this is known as Liouville's theorem. This explains the first two
inclusions above.

\begin{ex}
Not every symplectic flow is Hamiltonian. For instance, on
a closed manifold the Hamiltonian vector field necessarily has zeros.
So the flow of $\partial_{\varphi_1}$ on $(T^2=S^1\times S^1,\rmd\varphi_1
\wedge\rmd\varphi_2)$, which preserves the symplectic form, cannot
be Hamiltonian.
\end{ex}
\subsubsection{Reeb flows}
Let $(M,\xi=\ker\alpha)$ be a $(2n-1)$-dimensional \emph{contact manifold},
that is, $\xi$ is a hyperplane field (which I always assume to be
coorientable), whose defining $1$-form~$\alpha$ satisfies the condition
$\alpha\wedge(\rmd\alpha)^{n-1}\neq 0$. For a given~$\xi$, this condition
is independent of the choice of~$\alpha$. The hyperplane field $\xi$ is called
a \emph{contact structure}; $\alpha$ is called a
\emph{contact form} for~$\xi$.

The \emph{Reeb vector field} $R=R_{\alpha}$ of $\alpha$ is defined uniquely by
the conditions
\[ i_R\rmd\alpha=0\;\;\;\text{and}\;\;\;\alpha(R)=1.\]

\begin{ex}
\label{ex:xist}
The $1$-form
\[ \alpha_{\varepsilon}=x_1\,\rmd y_1-y_1\,\rmd x_1+\frac{1}{1+\varepsilon}\,
(x_2\,\rmd y_2-y_2\,\rmd x_2)\]
is a contact form on $S^3\subset\R^4$ for $\varepsilon\neq -1$.
Its Reeb vector field is $R=\partial_{\varphi_1}+(1+\varepsilon)
\partial_{\varphi_2}$ --- the vector field we encountered in
Section~\ref{subsection:flowS3}.
\end{ex}

The Reeb flow on $(M,\alpha)$ equals the Hamiltonian flow
on $H^{-1}(0)$ for $W=\R\times M$, $\omega=\rmd(\rme^t\alpha)$
and $H(p,t)=\rme^t$. This proves the third inclusion above.
The Reeb flow in the example may also be interpreted as the
Hamiltonian flow on an ellipsoid in $\R^4$ with its
standard symplectic form $\rmd x_1\wedge\rmd y_1+\rmd x_2\wedge\rmd y_2$.

\begin{rem}
The Reeb flow on $(M^{2n-1},\alpha)$ is volume-preserving for
the volume form $\alpha\wedge(\rmd\alpha)^{n-1}$.
\end{rem}
\subsection{The Weinstein conjecture}
\label{subsection:WC}
The Weinstein conjecture~\cite{wein79} asserts that any
Reeb flow on a closed manifold has a periodic orbit.
A combination of results of Rabinowitz~\cite{rabi79},
Eliashberg~\cite{elia89,elia92} and Hofer~\cite{hofe93} settles
this conjecture for the $3$-sphere. Eliashberg establishes a dichotomy
between so-called tight and overtwisted contact structures on $3$-manifolds.
On the $3$-sphere, there is a unique tight contact structure up
to isotopy: the standard contact structure $\xist=\ker\alpha_0$
described in Example~\ref{ex:xist}. Overtwisted contact structures
are determined by the homotopy class of the underlying
tangent $2$-plane field; on $S^3$, there is an integer family of
such structures, classified by the Hopf invariant.

\begin{rem}
Notice that a contact structure $\xi=\ker\alpha$ on a $3$-manifold
determines an orientation: the sign of the volume form $\alpha\wedge
\rmd\alpha$ does not depend on the choice of $\alpha$ defining a given~$\xi$.
All statements about classification of contact structures on $S^3$
here refer to \emph{positive} contact structures, i.e.\ the ones
inducing the standard orientation of $S^3\subset\R^4$.
\end{rem}

The contact forms on $S^3$ defining $\xist$, with the same coorientation
as the one given by~$\alpha_0$, are the $f\alpha_0$ with
$f\co S^3\rightarrow\R^+$. This is the same as restricting the
$1$-form $\alpha_0$ on $\R^4$ to the starshaped hypersurface
$fS^3$, for which the Weinstein conjecture was established by Rabinowitz.
For the overtwisted contact structures on $S^3$ (or any other
closed, orientable $3$-manifold), the conjecture follows from
Hofer's result.

In other words, the Seifert conjecture holds for Reeb vector fields.

For all closed, orientable $3$-manifolds, the Weinstein conjecture
was proved by Taubes~\cite{taub07}. Cristofaro-Gardiner and
Hutchings~\cite{cghu16} improved this by showing that every
Reeb flow on a $3$-manifold has at least two periodic orbits.
Together with Pomerleano~\cite{chp17} they showed that
if the first Chern class of the contact structure is torsion,
and the contact form is non-degenerate (meaning that the
return map near the periodic Reeb orbits never has $1$ as
an eigenvalue), there are either two or infinitely many
periodic Reeb orbits. The case of `two' only arises on lens spaces.
This dichotomy between two or infinitely many periodic Reeb orbits
is illustrated by Example~\ref{ex:xist}.

In particular, these results show that there can be no plugs for Reeb
vector fields in dimension three. In higher dimensions, one can argue
similarly --- using various instances where the Weinstein
conjecture holds --- to prove the non-existence of Reeb plugs.
However, as I want to show, there are
Reeb traps in dimensions at least five. For this we need to introduce the
concept of contact Hamiltonians.

\begin{rem}
\label{rem:rukimbira}
At the Cagliari conference I learned from David Blair
about earlier results in metric contact geometry concerning
the minimal number of periodic Reeb orbits, see~\cite[Section~3.4]{blai10}.
Rukimbira~\cite{ruki95} has shown that on a $(2n-1)$-dimensional closed
$K$-contact manifold (a metric contact manifold whose Reeb vector
field is Killing), there are at least $n$ periodic Reeb orbits.
If the manifold is simply connected and there are precisely
$n$ periodic orbits, the manifold is homeomorphic to a sphere~\cite{ruki99}.
\end{rem}

The number $n$ of periodic Reeb orbits on a $(2n-1)$-dimensional
contact manifold is realised on any irrational ellipsoid,
generalising Example~\ref{ex:xist}.
See also the construction in Section~\ref{section:finite}. One
may well conjecture this to be the minimal number of periodic Reeb
orbits in general.
\subsection{Contact Hamiltonians}
A \emph{contact vector field} $X$ on a contact manifold $(M,\xi)$
is a vector field whose flow preserves the contact structure~$\xi$.
When we choose a contact form $\alpha$ that defines $\xi=\ker\alpha$,
the condition on $X$ becomes $L_X\alpha=\lambda\alpha$ for some
function $\lambda\co M\rightarrow\R$.

A choice of contact form $\alpha$ for $\xi$ sets up a one-to-one
correspondence between contact vector fields $X$ and smooth functions
on $H\co M\rightarrow\R$ as follows. Given $X$, set $H_X:=\alpha(X)$.
Conversely, given $H$, define
\[ X_H:=HR+Y,\]
where $R$ is the Reeb vector field of~$\alpha$, and the
vector field $Y\in\xi$ is defined by
\[ i_Y\rmd\alpha=\rmd H(R)\alpha-\rmd H.\]
Since $\alpha(X_H)=H$, we have $H_{X_H}=H$. I leave it to the
reader to check that $X_{H_X}=X$, or see~\cite{geig08}.
The computation
\[ L_{X_H}\alpha=\rmd(i_{X_H}\alpha)+i_{X_H}\rmd\alpha
=\rmd H+i_{Y}\rmd\alpha=\rmd H(R)\alpha\]
shows that $X_H$ is indeed a contact vector field.

\begin{ex}
\label{ex:rescale}
The Reeb vector field corresponds to the constant function~$1$.
It follows that if $H>0$, then $X_H$ is the Reeb vector field of the
contact form $\alpha/H$, since $X_H$ preserves $\ker(\alpha/H)=\ker\alpha$,
and $(\alpha/H)(X_H)=1$.
\end{ex}

\begin{rem}
Observe that in contrast with the symplectic case, all
contact vector fields are (contact) Hamiltonian.
\end{rem}
\subsection{Reeb traps and Hamiltonian plugs}
\label{subsection:trap-plug}
\subsubsection{Non-existence of Reeb traps in dimension three}
The non-existence of Reeb traps in dimension three is a consequence
of the following global Darboux theorem due to
Eliashberg and Hofer~\cite{elho94}.

\begin{thm}[Eliashberg--Hofer]
\label{thm:elho}
Let $\alpha$ be a contact form on $\R^3$ that coincides with the
standard form $\alphast=\rmd z + (x\,\rmd y-y\,\rmd x)/2$ outside a compact
set. If the Reeb vector field $R_{\alpha}$ of $\alpha$ does not have any
periodic orbit, then $(\R^3,\alpha)$ is diffeomorphic to $(\R^3,\alphast)$.
\end{thm}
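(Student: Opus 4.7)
The plan is to combine the Hofer--Eliashberg theory of finite-energy pseudoholomorphic curves in symplectizations with the standard-at-infinity hypothesis to produce a foliation of~$\R^3$ that identifies $\alpha$ with~$\alphast$.

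First I would set up the symplectization $(W,\omega)=(\R\times\R^3,\rmd(\rme^s\alpha))$ and choose a compatible almost complex structure $J$ that agrees, outside a large compact subset of~$W$, with a standard $J_0$ adapted to~$\alphast$. In the standard region one has an explicit family of finite-energy $J_0$-holomorphic curves; concretely, after compactifying $(\R^3,\alphast)$ to $(S^3,\alpha_0)$ and identifying the symplectization of the latter with $\C^2\setminus\{0\}$, these can be taken to be the complex lines through the origin intersected with the standard region. I would restrict attention to a one-parameter subfamily lying entirely in the standard region; this provides the initial data.

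Next I would extend this subfamily, via the Fredholm theory for finite-energy pseudoholomorphic curves with non-degenerate asymptotics, to a maximal family $\{u_\tau\}_{\tau\in U}$ of $J$-holomorphic curves in~$W$. Pushing $\tau$ to~$\partial U$ and invoking the compactness theorem for finite-energy curves in symplectizations, any degeneration must yield a broken building in which at least one puncture is asymptotic to a periodic orbit of the Reeb vector field~$R_\alpha$; the exactness of~$\omega$ additionally rules out closed-sphere bubbles. The hypothesis that $R_\alpha$ has \emph{no} periodic orbits therefore forbids every degeneration, while the standard behaviour of $\alpha$ outside a compact set bounds the $\rmd\alpha$-energy of any limit curve and prevents escape to infinity. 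The family thus extends to a foliation of $W$ by finite-energy $J$-holomorphic curves.

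Projecting this foliation along the $\R$-action yields, on the whole of~$\R^3$, a smooth decomposition of the same combinatorial type as the one induced by~$\alphast$; matching the $\alpha$-leaves with the $\alphast$-leaves, and if necessary applying Gray's stability theorem to pass from the resulting identification of contact structures to the given contact forms, produces the desired diffeomorphism $(\R^3,\alpha)\cong(\R^3,\alphast)$. The hard step is the compactness argument: one must certify that the no-periodic-orbit assumption genuinely precludes every form of degeneration, a delicate piece of energy bookkeeping that lies at the heart of Hofer's approach and is what makes the theorem non-trivial.
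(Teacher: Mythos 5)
There is a genuine gap, and it lies exactly where your argument has to start. In the symplectisation of a contact manifold, a non-constant finite-energy \emph{punctured} curve is necessarily asymptotic at each non-removable puncture to a periodic orbit of the Reeb vector field, and closed curves are excluded by exactness of $\rmd(\rme^s\alpha)$. Under the hypothesis of the theorem $R_{\alpha}$ has \emph{no} periodic orbits, so the moduli space of finite-energy punctured curves you propose to work with is empty from the outset: there is nothing to extend and nothing to foliate with. Your candidate initial family makes the same error in a different guise: the complex lines in $\C^2\setminus\{0\}\cong\R\times S^3$ are the trivial cylinders over Hopf fibres, i.e.\ over periodic orbits of $\alpha_0$ on $S^3$, and the one-point compactification of $(\R^3,\alphast)$ to $(S^3,\alpha_0)$ does not exist at the level of contact \emph{forms} --- the Reeb field of $\alphast$ is $\partial_z$, with no closed orbits, while every orbit of the Reeb field of $\alpha_0$ is closed. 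This is precisely why Eliashberg and Hofer do not use punctured curves at all: they study holomorphic \emph{discs with boundary} on a cylinder $\{0\}\times Z$ chosen to contain the region where $\alpha\neq\alphast$. The boundary condition replaces the asymptotic condition, supplies the a priori energy bound, and makes the moduli space non-empty (a Bishop-type family) without presupposing any periodic orbits. The dichotomy is then: either this moduli space is non-compact, in which case breaking along cylindrical ends \emph{produces} periodic Reeb orbits, or it is compact, in which case one obtains a filling of $Z$ by discs transverse to $R_{\alpha}$.

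Your concluding step is also not viable as stated. Gray stability produces isotopies of contact \emph{structures}, not of contact forms, and is delicate on a non-compact manifold where the generating vector field need not be integrable; more importantly, the theorem is used in the text to control Reeb dynamics (non-existence of trapped orbits), so the diffeomorphism must carry $\alpha$ to $\alphast$ strictly, hence $R_{\alpha}$ to $\partial_z$. In the actual proof this diffeomorphism is built directly: the Reeb flow is transverse to the filling discs, and flowing along $R_{\alpha}$ from the disc family defines the coordinate that identifies $(\R^3,\alpha)$ with $(\R^3,\alphast)$. You correctly identify compactness as the heart of the matter, but the curves being compactified, the source of the initial family, and the construction of the final diffeomorphism all need to be replaced.
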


Notice that a diffeomorphism that sends one contact form to
the other (a so-called \emph{strict} contactomorphism),
also maps the Reeb vector field of one to the other.
This follows immediately from the defining equations of the
Reeb vector field.

Now, the Reeb vector field of $\alphast$ is $\partial_z$, which does not
have any trapped Reeb orbits. Thus, if $R_{\alpha}$ has a trapped
Reeb orbit, $(\R^3,\alpha)$ cannot be diffeomorphic to
$(\R^3,\alphast)$. Therefore, $R_{\alpha}$ must then also have
a periodic orbit, or else this would contradict the theorem.
So a trap, which is required to be aperiodic, cannot exist.

The idea of the proof of Theorem~\ref{thm:elho} is roughly as
follows. One studies the moduli space of holomorphic discs in
the symplectisation $\bigl(\R\times\R^3,\rmd(\rme^t\alpha)\bigr)$,
with boundary on a cylinder $\{0\}\times Z$ containing the region
in $\{0\}\times\R^3$ where $\alpha$
differs from~$\alphast$. The almost complex structure on
the symplectisation is one that preserves $\ker\alpha$ and sends
$\partial_t$ to $R_{\alpha}$. In particular, cylinders
over periodic Reeb orbits are holomorphic curves.

If this moduli space is non-compact, this leads to the breaking
of holomorphic curves along ends that become asymptotic to
such cylinders, which necessitates the existence of
periodic Reeb orbits.

If, on the other hand, this moduli space is compact, it leads to
a filling of the cylinder $\{0\}\times Z\subset\R^4$
by holomorphic discs, which descends
to a filling by discs of $Z\subset\R^3$ (this latter conclusion
is not obvious). By construction, the Reeb vector field is transverse
to these discs, which prevents the existence of a trapped orbit.

This proves the corollary about the non-existence of Reeb traps,
and by using the Reeb flow to define a coordinate (in the second
alternative), one obtains the theorem.
\subsubsection{Existence of Reeb traps in higher dimensions}
By contrast, we have the following result~\cite{grz14}.

\begin{thm}[Geiges--R\"ottgen--Zehmisch]
There are Reeb traps in all odd dimensions~$\geq 5$.
\end{thm}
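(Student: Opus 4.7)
The strategy is to mimic the Wilson plug construction in the contact category, exploiting the additional room available in dimension $2n-1\geq 5$. I would work inside a flow box for the standard contact form $\alphast$ on $\R^{2n-1}$, whose Reeb field is $\partial_z$, and modify $\alphast$ in the interior of $D^{2n-2}\times[0,1]$ to a new contact form $\alpha$ that still coincides with $\alphast$ near the boundary (ensuring condition~(iii)) but whose Reeb field is aperiodic and traps some orbit entering through $D^{2n-2}\times\{0\}$.

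For the aperiodic minimal set I would use a flat $(n-1)$-torus $T^{n-1}$ carrying a generic irrational linear flow, modelled on the Reeb dynamics of Example~\ref{ex:xist}: inside $S^{2n-1}\subset\C^n$ a product torus $\{|z_j|=r_j\}$ is Reeb-invariant for the analogue of $\alpha_{\varepsilon}$, and for generic moduli the restricted flow is minimal and aperiodic. This dictates the dimension restriction, since the torus has dimension $n-1$, and only for $n\geq 3$ is it at least two-dimensional and hence capable of carrying aperiodic dynamics --- a one-dimensional compact invariant set of a smooth flow is a periodic orbit, which parallels the Eliashberg--Hofer obstruction in dimension~$3$.

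The construction then proceeds in three steps. First, I would write down a local contact-form model $\alpha_{\mathrm{loc}}$ near a copy of such an invariant torus, using the contact Hamiltonian formalism to arrange that nearby orbits spiral toward the torus monotonically along a well-defined transverse coordinate, giving a positive-dimensional stable manifold. Second, I would interpolate between $\alpha_{\mathrm{loc}}$ and $\alphast$ inside the flow box via a cutoff; here Example~\ref{ex:rescale} helps, since contact Hamiltonians that are positive with respect to a common reference contact form can be combined convexly while preserving the contact condition, so one works with the Hamiltonians $H_{\mathrm{loc}}$ and $H_{\mathrm{st}}\equiv 1$ and builds a positive interpolating $H$. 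Third, I would verify the trap properties: condition~(iii) because $\alpha=\alphast$ near the boundary; condition~(ii) because the stable manifold of the torus meets $D^{2n-2}\times\{0\}$, producing an entering orbit asymptotic to the minimal set; and condition~(i) because on the torus the flow is aperiodic by design, while elsewhere the transverse monotone coordinate obstructs closed orbits.

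The hard part will be verifying global aperiodicity in the interpolation region, where the Reeb field is neither $\partial_z$ nor the local model. One must arrange that some coordinate --- the monotone transverse coordinate of the local model, or the $z$-coordinate itself where the form is still close to $\alphast$ --- remains strictly monotone along every non-minimal orbit throughout the interpolation, which pins down the cutoff profile simultaneously with the contact condition. A significant simplification compared to the plug case is the absence of a matching condition: orbits that pass through need not exit at the same transverse point they entered, so one has far more flexibility than in a Wilson-type doubling, consistent with the fact that Reeb plugs do not exist even in high dimensions while Reeb traps do.
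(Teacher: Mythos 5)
Your proposal takes essentially the same route as the paper: there, too, one rescales $\alphast$ on $\R^{2n-1}$ by a positive contact Hamiltonian $H$ (invoking Example~\ref{ex:rescale}) so that the resulting Reeb vector field equals $\partial_z$ outside a compact set, restricts to an irrational linear flow on an $(n-1)$-dimensional Clifford torus $T$ serving as the aperiodic invariant set, satisfies $\rmd z(X)>0$ off $T$ (your monotone coordinate, which handles exactly the global aperiodicity issue you flag), and maps a region $T\times[-1,0]$ into itself under the forward flow, which is the trapping mechanism. The correspondence extends to your explanation of the dimension restriction and your observation that the absence of a matching condition is what makes a trap, but not a plug, possible in the Reeb setting.
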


I describe the idea of the proof in dimension five; adapting this
proof to the general case is just a matter of notation.
We would like to construct a contact vector field $X$ for the standard
contact structure $\ker\alphast$, where
\[ \alphast=\rmd z+
\frac{1}{2}\sum_{j=1}^2(x_j\,\rmd y_j-y_j\,\rmd x_j),\]
such that

\begin{enumerate}[widest=(iii)]
\item[(i)] $X=\partial_{\varphi_1}+s\partial_{\varphi_2}$, $s\in [0,1]
\setminus\Q$ on the Clifford torus $\{r_1=r_2=1,\, z=0\}$;
\item[(ii)] $T\times [-1,0]$ is mapped to itself under the positive
$X$-flow;
\item[(iii)] $X=\partial_z$ outside a compact set;
\item[(iv)] $\rmd z(X)>0$ on $\R^5\setminus T$.
\end{enumerate}

These conditions translate into properties of the corresponding
Hamiltonian function~$H$, and one can show that a function $H>0$
with these properties exist. By Example~\ref{ex:rescale}, $X$
is the Reeb vector field of the rescaled contact form
$\alphast/H$.

The Clifford torus with the irrational foliation serves as the
aperiodic invariant set, which traps orbits by~(ii). Condition (iii)
guarantees that we only change the Reeb flow in a compact set.
Condition (iv) ensures that the flow is aperiodic.
\subsubsection{From a Reeb trap to a Hamiltonian plug}
A Hamiltonian plug can be constructed by doubling this
Reeb trap, see~\cite{grz16}. This simplifies earlier constructions of
Hamiltonian plugs.

Place one Reeb trap in the half-space $\{z<0\}$, and
put a mirror image of it in the half-space $\{ z>0\}$
by pulling it back via $\Phi\co z\mapsto -z$. On this mirror image
we need to work with the reversed Reeb flow, i.e.\ the
flow of $-\Phi^*R_{\alpha}$. Here the attempt to build a Reeb plug
breaks down, but as a Hamiltonian plug this works just fine.

The reason is essentially that the vector field $\partial_z$ is
the Hamiltonian vector field both of $(\R^{2n+1},\rmd\alphast)$
(in standard symplectic $\R^{2n+2}$) and for
$(\R^{2n+1},\rmd\Phi^*\alphast)$, for the same coorientation.
Replacing $\alphast$ by $\alphast/H$, the contact form used
as a trap, amounts to a (compactly supported) deformation of
$\R^{2n+1}$ in $\R^{2n+2}$.
\section{Cuts}
Contact cuts are a topological method introduced by
Lerman~\cite{lerm01} for constructing contact manifolds. The method
has topological and symplectic predecessors. The notion of `cut'
probably first arose as an alternative description
of blow-up constructions, and this is the view I take here.
The main advantage of this method over more flexible
topological gluings, say, is that it allows one
explicit control over the contact form, since no interpolation
of differential forms over gluing regions is required.
An application of contact cuts will be presented in
Section~\ref{section:embedding}.
\subsection{Blowing up}
(i) Let $V$ be a vector space over a field $\K$. Its projectivisation
$\PV$ is the space of all one-dimensional subspaces $\ell\subset V$ or,
equivalently, the quotient of $V\setminus\{0\}$ under the equivalence
relation
\[ x\sim y\;\; :\Longleftrightarrow\;\;
\text{$x=\lambda y$ for some $\lambda\in\K$.}\]
In the real or complex case we may alternatively think of
$\PV$ as the quotient of the unit sphere in $V$ under the action
of the elements $\lambda\in\K$ of unit length, i.e. $\Z_2$ or $S^1$,
respectively.

Over $\PV$ there is a tautological line bundle~$\eta(V)$: over the `point'
$\ell\in\PV$ we have the line made up of all the points
$x\in\ell\subset V$. An explicit realisation of $\eta(V)$ is given by
\[ \eta(V):=\{(\ell,x)\in\PV\times V\co x\in\ell\},\]
with bundle projection $(\ell,x)\mapsto\ell$.
We identify $\PV$ with the zero section of $\eta(V)$.

Observe that we have a canonical identification
\[ \begin{array}{ccc}
\eta(V)\setminus\PV & \stackrel{\cong}{\longrightarrow} & V\setminus\{0\}\\
(\ell,x)            & \longmapsto                      & x,
\end{array}\]
since any point $x\in V\setminus\{0\}$ determines a unique $1$-dimensional
subspace.

(ii) This construction easily generalises to vector bundles $E\rightarrow Q$.
Such a bundle admits a projectivisation $\PE\rightarrow Q$, and over this
projectivised bundle we have a tautological line bundle
$\K\hookrightarrow\eta(E)\rightarrow\PE$. As before, we can canonically
identify $\eta(E)\setminus PE$ with $E\setminus Q$.

(iii) The \emph{blow-up} of a differential (real or complex) manifold $M$
along a (real or complex) submanifold $Q$ is now defined as follows.
Identify an open tubular neighbourhood  of $Q$ in $M$ with
the total space $\nu Q$ of the normal bundle of~$Q$. Then form the
quotient space
\[ (M\setminus Q)\cup \eta(\nu Q)/\!\sim\]
under the identification
\[ \nu Q\setminus Q\ni x\sim([x],x)\in\eta(\nu Q)\setminus\PP(\nu Q).\]
The resulting space is again a manifold. The effect of the
construction is to replace the submanifold $Q$ by the projectivisation
of its normal bundle.

\begin{ex}
The tautological line bundle over $\PP(\R^2)\cong S^1$ is a M\"obius band
(without its boundary points). Hence, blowing up a point $q$ in a real
$2$-dimensional manifold $M$ is the same as cutting out a disc around $q$
and gluing in a M\"obius band under the identification
\[ M\setminus\Int(D^2)\cong S^1\cong\partial(\text{M\"obius band}).\]
The effect is to replace $q$ by the spine $S^1$ of the M\"obius band.
The M\"obius band may be thought of as
\[ \{ (t\cos\theta,t\sin\theta,[\theta])\in\R\times\R\times\R/\pi\Z\co
|t|\leq 1\}.\]
The projection
\[ (t\cos\theta,t\sin\theta,[\theta])\longmapsto
(t\cos\theta,t\sin\theta)\]
sends the spine of the M\"obius band to~$0$; the complement of the
spine is mapped diffeomorphically onto $D^2\setminus\{0\}$,
see Figure~\ref{figure:blowup}.
Beware that $t$ is not a global coordinate on the M\"obius band,
which amounts to saying that the line bundle $\eta(\R^2)\rightarrow S^1$
is non-trivial.
\end{ex}

\begin{figure}[h]
\centering
\includegraphics[scale=0.4]{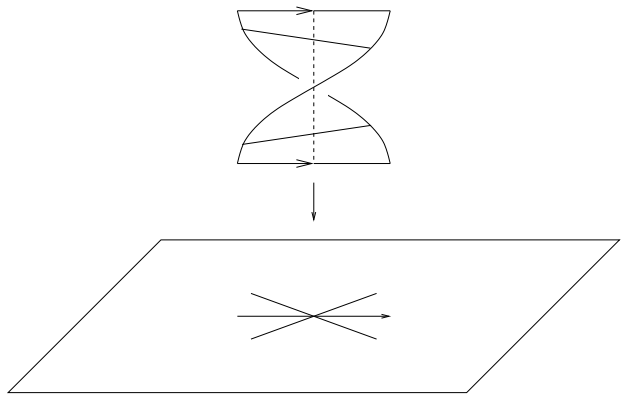}
  \caption{Blowing up a point in $\R^2$.}
  \label{figure:blowup}
\end{figure}

\begin{rem}
For the complex blow-up, the essential ingredient is a complex
bundle structure on the normal bundle~$\nu Q$. This is the key
to generalising the blow-up construction to symplectic submanifolds
of a symplectic manifold, see~\cite{mcdu84}. There are a number of
subtleties here, such as the question of uniqueness of the
symplectic form on the blow-up, see~\cite[Section~7.1]{mcsa17}.
Nonetheless, one can make sense of the Chern classes of the
blown-up symplectic manifold, see~\cite{gepa07}.
\end{rem}

\begin{rem}
A M\"obius band is the same as the complement of an open disc
in the real projective plane~$\RP^2$. Thus, blowing up a point
in a surface is the same as forming the connected sum with
a copy of~$\RP^2$. In dimension~$n$, analogously, blowing
up a point is topologically the same as a connected sum
with~$\RP^n$. In the complex case, it amounts to
a connected sum with a copy of $\overline{\CP}^n$,
a complex projective space with the opposite of
its natural orientation. We shall make use of this kind of blow-up
in Section~\ref{section:Ham-finite}.
\end{rem}

(iv) An alternative view of the blow-up construction is the
following. Remove the tubular neighbourhood $\nu Q\subset M$, and
identify points on the boundary sphere $\partial (M\setminus\nu Q)$
under the $\Z_2$-action or the $S^1$-action that defines
the projectivisation in the real or complex case, respectively.
The boundary, under this quotient, becomes $\PP(\nu Q)$.

Though it may not be immediately apparent that this
construction even produces a smooth manifold,
it is not difficult to see that it is indeed equivalent to the
previous description of a blow-up.

Often, this alternative viewpoint is the more appropriate one.
A good example are blow-ups of symplectic manifolds. When one
removes an open ball in a Darboux chart, the symplectic form on
the boundary sphere degenerates along the Hopf fibration.
Collapsing the $S^1$-fibres of the Hopf fibration produces
a symplectic quotient manifold~\cite[Section~7.1]{mcsa17}, the
symplectic blow-up of a point. From this point of view,
for instance, it is obvious why blowing up decreases the symplectic volume.

The notion of `cuts' \cite{lerm95,lerm01} provides the language to make these
statements precise.
\subsection{Topological cuts}
Suppose we are given a smooth $S^1$-action on a manifold $M$,
which we write as $p\mapsto\rme^{\rmi\varphi}p$ for $p\in M$
and $\rme^{\rmi\varphi}\in S^1$. Assume further that we have
an $S^1$-invariant function $f\co M\rightarrow\R$ with a
regular level set $f^{-1}(0)$ on which $S^1$ acts freely.
The smooth manifold $M'$ constructed in the next proposition is called
the \emph{cut} with respect to the given data.

\begin{prop}
\label{prop:cut-top}
The quotient space
\[ M':=\{ p\in M\co f(p)\geq 0\}/\text{$p\sim\rme^{\rmi\varphi}p$
for $f(p)=0$ and $\rme^{\rmi\varphi}\in S^1$}\]
is a smooth manifold.
\end{prop}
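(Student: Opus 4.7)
The strategy is the classical Lerman cut: one realises $M'$ as the $S^1$-quotient of a smooth hypersurface in $M\times\C$, and then only has to check that this auxiliary hypersurface is smooth and that the $S^1$-action on it is free.

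Concretely, I would equip $M\times\C$ with the diagonal $S^1$-action
\[ \rme^{\rmi\varphi}\cdot (p,z) := (\rme^{\rmi\varphi}p,\rme^{-\rmi\varphi}z), \]
and set
\[ N := \bigl\{(p,z)\in M\times\C \co f(p)=|z|^2\bigr\}. \]
The first step is to verify that $N$ is a smooth codimension-$1$ submanifold of $M\times\C$. Writing $F(p,z)=f(p)-|z|^2$, at points with $z\neq 0$ the differential $\rmd F$ is already nonzero in the $\C$-direction, and at points with $z=0$ one has $f(p)=0$, where $\rmd F=\rmd f$ is nonzero by the regular-value hypothesis on~$f$. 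So $0$ is a regular value of~$F$, and $N=F^{-1}(0)$ is smooth.

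The second step is to check that the diagonal $S^1$-action restricts to a free action on~$N$. At $(p,z)\in N$ with $z\neq 0$, freeness is automatic from the free $S^1$-action on $\C\setminus\{0\}$. At $(p,0)\in N$ one has $f(p)=0$, and by assumption $S^1$ acts freely on $f^{-1}(0)$. Since $S^1$ is compact and acts freely and smoothly on~$N$, the quotient $N/S^1$ is a smooth manifold of the correct dimension.

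The third step is to exhibit a bijection $\Psi\co N/S^1\to M'$ and use it to transport the smooth structure. Define $\Psi([p,z]):=[p]$; this is well defined because on $N$ the orbit of $(p,z)$ with $z\neq 0$ consists of points whose $M$-component lies in the open stratum $\{f>0\}$ where $M'$ is just $M$ itself, while on $\{z=0\}\subset N$ the orbit maps to $f^{-1}(0)/S^1$, which is precisely the collapse carried out in the definition of~$M'$. Injectivity and surjectivity are straightforward: given $[p]\in M'$ with $f(p)>0$, the unique lift in the orbit is $(p,\sqrt{f(p)})$, and for $f(p)=0$ the lift is $(p,0)$, unique up to the $S^1$-action. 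Transporting the smooth structure via~$\Psi$ defines an atlas on~$M'$. The step requiring a moment of care, which I would flag as the main technical point, is checking that this smooth structure agrees on the open set $\{f>0\}\subset M'$ with the manifold structure inherited from~$M$; this reduces to observing that on the complement of $\{z=0\}$ in~$N$ the composition $(p,z)\mapsto (p,|z|)\mapsto p$ factors through $N/S^1$ as a diffeomorphism onto $\{f>0\}\subset M$, because $|z|=\sqrt{f(p)}$ is a smooth function of~$p$ there.
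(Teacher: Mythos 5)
Your proposal is correct and follows essentially the same route as the paper: realising $M'$ as the quotient of the regular level set $F^{-1}(0)\subset M\times\C$, $F(p,z)=f(p)-|z|^2$, under the free anti-diagonal $S^1$-action. You merely spell out in more detail the case distinction ($z=0$ versus $z\neq 0$) for regularity and freeness, and the identification of the two quotients, which the paper relegates to a remark.
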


\begin{proof}
The idea of the proof is to identify $M'$
with the quotient of a larger manifold under a \emph{free}
$S^1$-action. Indeed, $S^1$ acts on the product manifold
$M\times\C$ by the anti-diagonal action
\[ (p,z)\longmapsto (\rme^{\rmi\varphi}p,\rme^{-\rmi\varphi}z).\]
The function
\[ F(p,z):=f(p)-|z|^2\]
is $S^1$-invariant, and from
\[ \rmd F=\rmd f-2r\,\rmd r\]
we see that $F^{-1}(0)$ is a regular level set.
The $S^1$-action on this level set is free: for $f(p)=0$, the action is free
on the $M$-factor; for $f(p)>0$, on the $\C$-factor. It follows that
the quotient $F^{-1}(0)/S^1$, which coincides with $M'$,
is a smooth manifold.
\end{proof}

\begin{rem}
To see the identification of the two quotients $M'$ and
$F^{-1}(0)/S^1$ geometrically,
observe the following. When we restrict attention to
a collar neighbourhood $[0,\varepsilon)\times f^{-1}(0)$
of $f^{-1}(0)$ in $\{f\geq 0\}$, the level set $F^{-1}(0)$
looks like the product of $f^{-1}(0)$ with a standard paraboloid
in $\C\times\R$, on which $S^1$ acts by rotation.
\end{rem}

In order to define the quotient space $M'$ in Proposition~\ref{prop:cut-top},
it would be sufficient to have the $S^1$-action defined
on $f^{-1}(0)$. However, in order to argue as in the proof and
ensure that the quotient is smooth, one needs the $S^1$-action
to be defined at least in a collar neighbourhood. This, of course,
can always be done by making the $S^1$-action independent of the
collar parameter. In the presence of additional geometric structures,
the existence of such an extension becomes an honest restriction.

\begin{ex}
\label{ex:S3-cut}
Consider a solid torus $V=S^1\times D^2$ with $S^1$-action on
the boundary $\partial V$ given by the flow of~$\partial_s$,
where $s$ denotes the $S^1$-coordinate. Then the quotient of $V$
with respect to this action on the boundary is the $3$-sphere:
the quotient map can be written explicitly as
\[ \begin{array}{ccc}
S^1\times D^2 & \longrightarrow & S^3\subset\C^2\\
(s;r,\theta)  & \longmapsto     & \bigl(\sqrt{1-r^2}\,\rme^{\rmi s},
                                  r\rme^{\rmi\theta}\bigr).
\end{array}\]
Equivalently, this quotient may be regarded as the cut with respect
to the extended $S^1$-action on a collar of $\partial V\subset V$.
\end{ex}
\subsection{Contact reduction}
Suppose $M$ is a manifold admitting a contact form $\alpha$
and a \emph{strict} contact $S^1$-action, that is, an action
preserving this contact form. If $X$ is the vector field that generates
the $S^1$-action, this translates into $L_X\alpha=0$. The
\emph{momentum map} of the action is defined by
\[ \begin{array}{rccc}
\mu\co & M & \longrightarrow & \R\\
       & p & \longmapsto     & \alpha_p(X_p).
\end{array}\]
In other words, the momentum map of the $S^1$-action
is simply the Hamiltonian function corresponding to the vector field
generating the action.

We compute
\begin{equation*}
\tag{*}
\label{eqn:mu}
\rmd\mu=\rmd(\alpha(X))=L_X\alpha-i_X\rmd\alpha=-i_X\rmd\alpha.
\end{equation*}
This has the following consequences:
\begin{enumerate}[widest=(ii)]
\item[(i)] $X$ is tangent to the levels of~$\mu$;
\item[(ii)] $0$ is a regular value of $\mu$ if and only if $X$ is
non-singular on $\mu^{-1}(0)$.
\end{enumerate}
For (ii), note that the level $\mu^{-1}(0)$ is precisely defined by
the condition $\alpha(X)=0$, and $\rmd\alpha$ is a non-degenerate $2$-form
on $\ker\alpha$. So if $X$ is non-singular, we can find $X'\in\ker\alpha$
such that
\[ \rmd\mu(X')=-\rmd\alpha(X,X')\neq 0.\]
In particular, we notice that if one of the equivalent conditions in
(ii) holds, then the contact structure $\ker\alpha$ is transverse
to $\ker(\rmd\mu)$, i.e.\ to the level set $\mu^{-1}(0)$.

\begin{lem}
If the $S^1$-action on $\mu^{-1}(0)$ is free, the $1$-form $\alpha$
induces a contact form on the quotient manifold $\mu^{-1}(0)/S^1$.
\end{lem}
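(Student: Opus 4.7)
The plan is to first produce a $1$-form $\bar\alpha$ on $N:=\mu^{-1}(0)/S^1$ by descent, and then verify the contact condition pointwise using the symplectic geometry of $(\xi,\rmd\alpha|_\xi)$.

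\textbf{Step 1 (Descent).} Let $\iota\co\mu^{-1}(0)\hookrightarrow M$ denote the inclusion and $\pi\co\mu^{-1}(0)\rightarrow N$ the quotient projection, which is a submersion since the $S^1$-action is free and $0$ is a regular value of~$\mu$. Set $\alpha_0:=\iota^*\alpha$. The definition of~$\mu$ gives $(\iota^*\alpha)(X)=\mu\circ\iota=0$, and strict invariance gives $L_X\alpha_0=\iota^*L_X\alpha=0$. Hence $\alpha_0$ is basic with respect to~$\pi$, so there is a unique $1$-form $\bar\alpha$ on~$N$ with $\pi^*\bar\alpha=\alpha_0$.

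\textbf{Step 2 (A distribution on $\mu^{-1}(0)$).} At $p\in\mu^{-1}(0)$, consider
\[ D_p:=\xi_p\cap T_p\mu^{-1}(0).\]
Both $\xi$ and $\mu^{-1}(0)$ have codimension~$1$ in~$M$, and the remark preceding the lemma states that $\xi$ is transverse to $\mu^{-1}(0)$; hence $\dim D_p=2n-3$. The vector $X_p$ lies in $D_p$, because $\alpha_p(X_p)=\mu(p)=0$ and $X$ is tangent to the levels of~$\mu$ by (i). Moreover $D_p$ descends to $T_{[p]}N$ under $\rmd\pi$, with image $D_p/\langle X_p\rangle\subset T_{[p]}N$ of dimension $2n-4$. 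Since $\bar\alpha\circ\rmd\pi=\alpha_0$ vanishes precisely on $D_p$, we obtain $\ker\bar\alpha_{[p]}=D_p/\langle X_p\rangle$, a hyperplane in $T_{[p]}N$.

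\textbf{Step 3 (Contact condition).} The key point is that $X$ spans the kernel of $\rmd\alpha|_{D}$. Indeed, by (\ref{eqn:mu}) we have $i_X\rmd\alpha=-\rmd\mu$, and $\rmd\mu$ vanishes on $T\mu^{-1}(0)\supset D$; so $X$ lies in $\ker(\rmd\alpha|_D)$. On the other hand $(\xi,\rmd\alpha|_\xi)$ is symplectic of dimension $2n-2$, and $D$ is a hyperplane in~$\xi$, so its symplectic orthogonal inside~$\xi$ is a line. Thus $\langle X\rangle$ exhausts $\ker(\rmd\alpha|_D)$, and $\rmd\alpha$ descends to a non-degenerate (symplectic) $2$-form on $D/\langle X\rangle=\ker\bar\alpha_{[p]}$. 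This is equivalent to $\bar\alpha\wedge(\rmd\bar\alpha)^{n-2}\neq 0$ at $[p]$, so $\bar\alpha$ is a contact form on~$N$.

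The main obstacle is Step~3: one must identify precisely the $1$-dimensional degeneracy of $\rmd\alpha|_D$ with the direction of the $S^1$-action. Once the transversality of~$\xi$ to~$\mu^{-1}(0)$ and the identity $i_X\rmd\alpha=-\rmd\mu$ are combined with a dimension count inside the symplectic hyperplane bundle~$\xi$, the contact condition on the quotient follows automatically, and the invariance/horizontality verifications for descent are routine.
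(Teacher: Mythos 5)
Your proof is correct and follows essentially the same route as the paper's: descent of $\alpha$ via $\alpha(X)=0$ and $L_X\alpha=0$, followed by the identification of $\langle X\rangle$ as the kernel of $\rmd\alpha$ on the transverse intersection $\ker\alpha\cap T\mu^{-1}(0)$ using the identity $i_X\rmd\alpha=-\rmd\mu$. You merely make explicit the dimension count inside the symplectic bundle $(\xi,\rmd\alpha|_\xi)$ that the paper leaves implicit.
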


\begin{proof}
The conditions $\alpha(X)=0$ and $L_X\alpha=0$ imply that $\alpha$
descends to a well-defined nowhere zero $1$-form $\oalpha$
on the quotient.
Moreover, the kernel of $\rmd\alpha_p$ on the transverse intersection
\[ T_p\bigl(\mu^{-1}(0)\bigr)\cap\ker\alpha_p\]
is, by~(\ref{eqn:mu}), spanned by~$X$. Thus, on the quotient under
the $S^1$-action, $\rmd\oalpha$ is non-degenerate
on $\ker\oalpha$.
\end{proof}

\begin{rem}
If one makes the weaker assumption that $0$ is a regular level of~$\mu$,
the $S^1$-action on $\mu^{-1}(0)$ will only be semi-free, in general,
and the quotient a contact \emph{orbifold}.
\end{rem}

An introduction to contact reduction in the case of actions
by arbitrary compact Lie groups can be found in~\cite[Section~7.7]{geig08}.
\subsection{Contact cuts}
\label{subsection:contactcut}
We can now combine the themes of the previous sections. For further
details see \cite{lerm01} or~\cite{agz18b}. Thus, let $M$ be
a manifold carrying a contact form~$\alpha$. Let $\mu_M$ be
the momentum map of a strict contact $S^1$-action on $(M,\alpha)$
generated by the vector field~$X$. As above, we assume that the
$S^1$-action is free on $\mu^{-1}(0)$. We wish to perform the
cut of $M$ at the $0$-level of~$\mu_M$, that is, we want to
collapse the $S^1$-action on the boundary of $\{\mu_M\geq 0\}$, and
find a contact form on this cut.

On the product manifold $M\times\C$ we have the contact form
$\alpha+x\,\rmd y-y\,\rmd x$. The vector field $X-(x\partial_y-y\partial_x)$
generates a strict contact $S^1$-action with momentum map
\[ \mu(p,z)=\mu_M(p)-|z|^2.\]
Then the cut is the reduced manifold $\mu^{-1}(0)/S^1$.

Write $\pi\co\mu^{-1}(0)\rightarrow \mu^{-1}(0)/S^1$ for the quotient map.
The contact form $\oalpha$ on this quotient is characterised by
\[ \pi^*\oalpha=(\alpha+x\,\rmd y-y\,\rmd x)|_{T(\mu^{-1}(0))}.\]
This entails the following.

(i) The composition
\[ \begin{array}{ccccc}
\{p\in M\co\mu_M(p)>0\} & \longrightarrow & \mu^{-1}(0)         &
     \longrightarrow & \mu^{-1}(0)/S^1\\
p                       & \longmapsto     & \bigl(p,\sqrt{\mu_M(p)}\bigr) &
     \longmapsto     & \bigl[\bigl(p,\sqrt{\mu_M(p)}\bigr)\bigr]
\end{array}\]
is an equidimensional strict contact embedding, i.e.\ it pulls back
$\oalpha$ to~$\alpha$.

(ii) The inclusion
\[ \begin{array}{ccc}
\mu_M^{-1}(0) & \longrightarrow & \mu^{-1}(0)\\
p             & \longmapsto     & (p,0)
\end{array}\]
induces a codimension~$2$ strict contact embedding
\[ \mu_M^{-1}(0)/S^1\longrightarrow\mu^{-1}(0)/S^1.\]

The fact that these are \emph{strict} contact embeddings makes
contact cuts particularly useful for controlling the Reeb dynamics,
as we shall see below. For other applications, see~\cite{lerm01}.
\section{Finite numbers of periodic orbits}
\label{section:finite}
My aim in this section is to outline a proof of the
following result from~\cite{agz18a}, which contrasts with
the result of Cristofaro-Gardiner, Hutchings and Pomerleano~\cite{chp17}
mentioned in Section~\ref{subsection:WC}.

\begin{thm}[Albers--Geiges--Zehmisch]
\label{thm:agz1}
In any odd dimension $\geq 5$ there are closed, connected contact manifolds
with arbitrarily large finite numbers of periodic Reeb orbits.
In dimension five, any number $\geq 3$ can be so realised.
\end{thm}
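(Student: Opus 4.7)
My plan is to realise the required contact manifolds as Boothby--Wang $S^1$-bundles over iterated blow-ups of complex projective spaces, and to pin down the Reeb dynamics by combining the fibre $S^1$-action with the lift of a Hamiltonian $S^1$-action on the base, giving a $T^2$-action whose orbits encode the Reeb flow. The contact cuts of Section~\ref{subsection:contactcut} enter to construct the bases as equivariant symplectic blow-ups, which lift to contact cuts in the total space.

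Concretely, let $(B,\omega)$ be a closed connected symplectic manifold with $[\omega]$ integral, and let $\pi\co M\to B$ be the associated principal $S^1$-bundle, carrying a connection $1$-form $\alpha$ with $\rmd\alpha=\pi^*\omega$; this is a contact form whose Reeb vector field $R$ generates the fibre action. Suppose $(B,\omega)$ also carries a Hamiltonian $S^1$-action generated by a positive Hamiltonian $h\co B\to\R$ with isolated fixed points, and let $\widetilde{Y}$ denote a lift to a strict contact $S^1$-action on $(M,\alpha)$ with momentum map $\pi^*h$. For small $\varepsilon>0$ set $\tH=1+\varepsilon\,\pi^*h$; by Example~\ref{ex:rescale}, the contact form $\alpha/\tH$ has Reeb vector field $X_{\tH}=R+\varepsilon\,\widetilde{Y}$, a generator of a diagonal direction inside the commuting $T^2$-action of $R$ and~$\widetilde{Y}$. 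On any generic $T^2$-orbit the slope $1:\varepsilon$ is irrational, so that $X_{\tH}$ has no closed orbit there; closed orbits can only sit inside lower-dimensional $T^2$-orbits, and these occur precisely over the fixed points of the $S^1$-action on~$B$, where the fibre collapses to a $1$-dimensional $T^2$-orbit along which $X_{\tH}$ is a nonzero multiple of~$R$. Hence the number of periodic Reeb orbits of $\alpha/\tH$ equals the number of fixed points of the base $S^1$-action.

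To prescribe this fixed-point number, take $B=\CP^{n-1}$ with a generic circle inside its standard $T^{n-1}$-action, yielding $n$ isolated fixed points, and then blow up equivariantly at $k$ of them, each blow-up introducing exactly one new fixed point on the exceptional divisor while preserving all the old ones. The resulting toric symplectic base has $n+k$ fixed points, and the Boothby--Wang construction above produces a closed connected contact $(2n-1)$-manifold with exactly $n+k$ periodic Reeb orbits. For $n=3$ this covers every count $\geq 3$ in dimension~$5$; for $n\geq 4$ one obtains arbitrarily large counts in every higher odd dimension. Crucially, each equivariant symplectic blow-up of $B$ corresponds in the total space $M$ to a contact cut of the sort described in Section~\ref{subsection:contactcut}, performed along the $S^1$-action that rotates the normal disc to the blown-up fibre, so the entire construction stays within the contact-topological framework.

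The main technical obstacle is the no-spurious-orbits assertion: that $R+\varepsilon\,\widetilde{Y}$ really has no closed orbits outside the fibres above the fixed points of the base action. This reduces to a careful analysis of the isotropy stratification of the $T^2$-action on~$M$ together with a diophantine choice of~$\varepsilon$, ensuring that no rational slope $p/q$ appears on any positive-dimensional $T^2$-orbit. I also need to verify that the $T^2$-action and this irrationality condition survive each of the iterated equivariant blow-ups, so that at every stage the new exceptional fixed point contributes precisely one new Reeb orbit and no resonances are introduced by the modified geometry; this last point I expect to handle by performing the blow-ups in a small Darboux--like neighbourhood of each fixed point, where the local $T^2$-structure is explicit.
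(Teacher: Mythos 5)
Your strategy coincides with the paper's: take the Boothby--Wang bundle over a projective space or its equivariant symplectic blow-ups, lift the Hamiltonian circle action on the base to a strict contact circle action upstairs, and perturb the Reeb field inside the resulting $T^2$-action by an irrational parameter so that the only closed orbits are the fibres over the fixed points. Two of your steps, however, are not in order as written. First, the existence of the lift of the base $S^1$-action to a strict contact \emph{circle} action $\widetilde{Y}$ on $M$ is exactly where the paper does its real work, and you simply posit it. The lifted contact vector field $\tH R+\Xh$ always exists, but its flow need not be $2\pi$-periodic: one must first normalise $H$ so that $H>0$ and $H(p_0)\in\N$ at some fixed point, and then a holonomy computation (the shift along the fibre after time $2\pi$ equals $-\int_{\Delta}\omega=2\pi H(p)$ mod $2\pi$) shows the lifted orbits close up. Without this integrality normalisation your $T^2$ does not exist and the orbit-stratification argument has nothing to act on. Conversely, the step you single out as the main technical obstacle is immediate: closed orbits of $R+\varepsilon\widetilde{Y}$ lie in $T^2$-orbits, the isotropy of a two-dimensional orbit is finite, so the induced linear flow of slope $\varepsilon$ is aperiodic for \emph{every} irrational $\varepsilon$ --- no Diophantine condition and no survival-under-blow-up analysis is needed --- and the one-dimensional orbits are precisely the fibres over the fixed points of the base action.

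Second, your fixed-point count after blow-up is wrong once the base has complex dimension $m\geq 3$. Blowing up a fixed point at which the linearised action has distinct weights does not ``preserve the old fixed point and add one new one'': it deletes that fixed point and replaces it by the $m$ fixed points of the induced action on the exceptional divisor $\CP^{m-1}$, so each blow-up adds $m-1$ fixed points. This is why the paper obtains $n+1+a(n-1)$ periodic orbits in contact dimension $2n+1$. Your arithmetic happens to be correct for $\CP^2$, where the exceptional $\CP^1$ carries two fixed points and the net gain is one, so the dimension-five claim (every number $\geq 3$) goes through; and in higher dimensions the corrected count $n+k(n-2)$ still grows without bound, so the conclusion of the theorem survives --- but the intermediate assertion that the blown-up base has $n+k$ fixed points is false for $n\geq 4$ and should be repaired.
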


The main tool will be the lifting of a Hamiltonian $S^1$-action
on an integral symplectic manifold $(B,\omega)$ to a contact action
on the Boothby--Wang $S^1$-bundle over $B$ with Euler class the integral
cohomology class $-[\omega/2\pi]$.
\subsection{Lifting Hamiltonian vector fields in Boothby--Wang bundles}
Suppose $(B,\omega)$ is a closed symplectic manifold such that the
de Rham cohomology class $e:=-[\omega/2\pi]$ is integral.
Then one can find a connection $1$-form $\alpha$ on the
$S^1$-bundle $M$ over $B$ of Euler class~$e$ such that $\omega$
is the connection form of~$\alpha$. This means that, writing
$\pi\co M\rightarrow B$ for the bundle projection, we have $\pi^*\omega
=\rmd\alpha$. The $2$-form $\omega$ being symplectic then translates
into $\alpha$ being a contact form.
Contact manifolds of this type are known as Boothby--Wang bundles
or prequantisation bundles, see~\cite[Section~7.2]{geig08}
for further details.

Now consider a Hamiltonian function $H\co B\rightarrow\R$ and let $X=X_H$
be the corresponding Hamiltonian vector field. Write $\Xh$ for the
horizontal lift of $X$ to~$M$, i.e.\ the vector field on $M$
satisfying
\[ \alpha(\Xh)=0,\;\;\; T\pi(\Xh)=X.\]
Also, write $\tH:=H\circ\pi$ for the lift of~$H$, and $R$ for the
Reeb vector field of~$\alpha$.

\begin{lem}
The vector field $\tX:=\tH R+\Xh$ is a strict contact
vector field for the contact form~$\alpha$.
\end{lem}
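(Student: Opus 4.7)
The plan is a direct computation using Cartan's magic formula
\[ L_{\tX} \alpha = \rmd\bigl(i_{\tX}\alpha\bigr) + i_{\tX}\, \rmd\alpha, \]
exploiting the splitting of $TM$ into the vertical direction spanned by $R$ and the horizontal distribution $\ker\alpha$. The ingredients I would draw on are the Boothby--Wang data $\alpha(R) = 1$, $i_R\, \rmd\alpha = 0$, $\pi^*\omega = \rmd\alpha$, together with the defining properties $\alpha(\Xh) = 0$ and $T\pi(\Xh) = X_H$ of the horizontal lift.

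First I would handle the exterior-derivative term. Horizontality of $\Xh$ and the Reeb normalisation give $i_{\tX}\alpha = \tH \cdot 1 + 0 = \tH$ at once, so $\rmd(i_{\tX}\alpha) = \rmd\tH = \pi^*\rmd H$.

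For the interior-product term, the Reeb identity $i_R\, \rmd\alpha = 0$ kills the vertical part of $\tX$, leaving $i_{\tX}\, \rmd\alpha = i_{\Xh}\, \rmd\alpha$. Writing $\rmd\alpha = \pi^*\omega$ and applying the Hamilton equation $\omega(X_H,\cdot) = \rmd H$ to $T\pi(v)$ for arbitrary $v \in TM$, one gets
\[ (i_{\Xh}\, \rmd\alpha)(v) = \omega\bigl(X_H, T\pi(v)\bigr) = \rmd H\bigl(T\pi(v)\bigr) = (\pi^*\rmd H)(v), \]
so this second term is also a multiple of $\pi^*\rmd H$.

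Assembling the two contributions, the vertical correction $\tH R$ has been built into $\tX = \tH R + \Xh$ precisely so that the two $\pi^*\rmd H$ pieces balance and $L_{\tX}\alpha = 0$, confirming that $\tX$ is a strict contact vector field. The argument is purely algebraic and presents no conceptual obstacle; the only thing requiring attention is the bookkeeping of signs between the Hamiltonian equation $\omega(X_H,\cdot) = \rmd H$ and the curvature equation $\pi^*\omega = \rmd\alpha$, which is exactly what determines whether the vertical correction appears as $+\tH R$ or $-\tH R$.
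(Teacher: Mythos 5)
Your approach is exactly the paper's: Cartan's formula, $\alpha(\tX)=\tH$, and $i_R\rmd\alpha=0$ reducing the second term to $i_{\Xh}\rmd\alpha=\pi^*(i_X\omega)$. But your computation, taken at face value, does not prove the lemma. Using the convention $\omega(X_H,\cdot)=\rmd H$ you correctly obtain $(i_{\Xh}\rmd\alpha)(v)=\omega\bigl(X_H,T\pi(v)\bigr)=(\pi^*\rmd H)(v)$, i.e.\ the second term equals $+\rmd\tH$, the \emph{same} sign as the first term $\rmd(i_{\tX}\alpha)=\rmd\tH$. These two contributions add rather than cancel, giving $L_{\tX}\alpha=2\,\rmd\tH$, which vanishes only for constant $H$. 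Your concluding sentence asserts that the pieces ``balance,'' contradicting the computation that precedes it; flagging the sign bookkeeping as ``the only thing requiring attention'' and then not doing it is precisely the gap.

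The resolution is that in this section the relevant convention is $i_X\omega=-\rmd H$ (opposite to the one stated earlier in the paper for Hamiltonian vector fields); this is visible in the subsequent lemma, where the defining equations of the horizontal lift are written as $\alpha(\Xh)=0$ and $i_{\Xh}\rmd\alpha=-\rmd\tH$, and in the holonomy computation $-\int_\gamma i_X\omega=\int_\gamma\rmd H$. With that convention the second term is $-\rmd\tH$ and the sum vanishes. Equivalently, if you insist on $i_{X_H}\omega=+\rmd H$, the correct strict contact lift is $-\tH R+\Xh$. Either way, you must commit to one sign convention and carry it through; as written, the two displayed identities in your argument are incompatible with the conclusion.
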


\begin{proof}
We compute
\[ L_{\tX}\alpha=\rmd(\alpha(\tX))+i_{\tX}\rmd\alpha=
\rmd\tH+i_{\Xh}\rmd\alpha=
\rmd\tH+\pi^*(i_X\omega)=0.\qedhere\]
\renewcommand{\qed}{}
\end{proof}
\subsection{Lifting Hamiltonian $S^1$-actions}
Now suppose the Hamiltonian vector field $X$ induces an action
on $B$ by the circle $S^1=\R/2\pi\Z$. By adding a suitable
constant to the Hamiltonian function~$H$, we may assume that $H>0$ and
$H(p_0)\in\N$ at some chosen singularity $p_0\in B$ of~$X$.

\begin{prop}
The lifted vector field $\tX$ then defines an $S^1$-action on~$M$.
\end{prop}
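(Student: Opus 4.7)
The plan is to show $\Phi_{2\pi}=\mathrm{id}_M$ for the flow $\Phi_t$ of $\tX$, so that the flow descends to an action of $S^1=\R/2\pi\Z$. The first observation is that $T\pi\circ\tX=X\circ\pi$, since $T\pi(R)=0$ and $T\pi(\Xh)=X$; consequently $\pi\circ\Phi_t=\phi_t\circ\pi$, and because $X$ integrates to an $S^1$-action one has $\phi_{2\pi}=\mathrm{id}_B$. So $\Phi_{2\pi}$ covers the identity on $B$ and preserves every fibre of $\pi$.

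Next I would check that $\tX$ is invariant under the principal $S^1$-action on $M$ generated by $R$: the field $R$ is trivially invariant, $\tH=H\circ\pi$ is fibrewise constant, and $\Xh$ is invariant because both the horizontal distribution $\ker\alpha$ and the base field $X$ are. Hence $\Phi_{2\pi}$ commutes with this $S^1$-action, which forces it to act on each fibre by translation: there is a smooth function $\lambda\co B\to\R/2\pi\Z$ with
\[
\Phi_{2\pi}(p)=\rho_{\lambda(\pi(p))}(p),
\]
where $\rho_t$ denotes the time-$t$ Reeb flow. A direct computation yields the gauge formula $(\rho_{\lambda\circ\pi})^{*}\alpha=\alpha+\pi^{*}\rmd\lambda$; since $\Phi_{2\pi}$ preserves $\alpha$ (being a time-$2\pi$ map of the strict contact field $\tX$ by the previous lemma), we must have $\rmd\lambda=0$, so on the connected base $B$ the function $\lambda$ is a single constant $\lambda_0$.

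It remains to identify $\lambda_0$, which I would do by evaluating at $p_0$. There $X(p_0)=0$, so $\Xh$ vanishes on the whole fibre $\pi^{-1}(p_0)$ and $\tX=H(p_0)R$ on that fibre. The flow thus restricts there to the Reeb flow reparametrised by the constant $H(p_0)$, and
\[
\Phi_{2\pi}(\tilde p_0)=\rho_{2\pi H(p_0)}(\tilde p_0)=\tilde p_0
\]
by the hypothesis $H(p_0)\in\N$ and the fact that $R$ generates the $2\pi$-periodic fibre rotation. Therefore $\lambda_0=0$ and $\Phi_{2\pi}=\mathrm{id}_M$, as desired.

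The step that I expect to require the most care is the passage from ``$\Phi_{2\pi}$ is a fibrewise rotation'' to ``the rotation amount is constant'': one has to verify the pullback identity $(\rho_{\lambda\circ\pi})^{*}\alpha=\alpha+\pi^{*}\rmd\lambda$ cleanly and invoke precisely the connection property $\pi^{*}\omega=\rmd\alpha$ in order to turn $\alpha$-preservation into the local constancy of~$\lambda$. This is the standard fact that connection-preserving gauge transformations of a principal $U(1)$-bundle are locally constant; everything else in the argument is modular bookkeeping built on top of the preceding lemma and the hypothesis that $H(p_0)$ is an integer.
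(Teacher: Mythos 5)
Your argument is correct, and it takes a genuinely different route from the paper's. The paper fixes $p\in B$, lets the $S^1$-orbit $\beta$ through $p$ bound the singular disc $\Delta$ swept out by a path from $p_0$ to $p$, and computes the holonomy of the horizontal lift of $\beta$ by Stokes' theorem, $\int_{\tilde{\Delta}}\rmd\alpha=\int_{\Delta}\omega=2\pi\bigl(H(p)-H(p_0)\bigr)$; comparing this with the fibre drift $2\pi H(p)$ contributed by the $\tH R$-component of $\tX$ shows that each $\tX$-orbit closes up after time $2\pi$. You instead exploit symmetry: the time-$2\pi$ map $\Phi_{2\pi}$ covers $\mathrm{id}_B$, commutes with the principal $S^1$-action because $\tX$ is $R$-invariant (which is precisely the content of the lemma the paper proves just \emph{after} this proposition, so you are borrowing it slightly ahead of schedule), and preserves $\alpha$; the gauge formula $(\rho_{\lambda\circ\pi})^{*}\alpha=\alpha+\pi^{*}\rmd\lambda$ then forces the fibrewise rotation angle $\lambda$ to be locally constant, and evaluation on the fibre over $p_0$, where $\tX=H(p_0)R$ with $H(p_0)\in\N$, gives $\lambda\equiv 0$. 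Your route avoids the singular-disc computation entirely, and with it the degenerate cases (multiply covered or constant $\beta$) that the paper has to mention separately; the price is the explicit use of connectedness of $B$, which the paper's path argument needs just as much. What the Stokes computation buys the paper is the explicit holonomy value $-h=2\pi H(p)\bmod 2\pi$, which is reused immediately afterwards to deduce that $H$ is integer-valued at every fixed point of the action; but that consequence also falls out of your argument a posteriori, since over any fixed point $p$ one has $\Phi_{2\pi}=\rho_{2\pi H(p)}=\mathrm{id}$ on the fibre.
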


\begin{proof}
Given a point $p\in B$, let $\gamma$ be a simple path from $p_0$ to~$p$.
Under the $S^1$-action, this path sweeps out a disc~$\Delta$ (in the sense
of smooth singular homology theory), bounded by the $S^1$-orbit
$\beta$ through~$p$, see Figure~\ref{figure:discB}.
This orbit may well be multiply covered
or a fixed point, in which case we have a smooth singular $2$-sphere.

\begin{figure}[h]
\labellist
\small\hair 2pt
\pinlabel $\dot{\gamma}$ [tr] at 337 29
\pinlabel $X$ [tl] at 357 96 
\pinlabel $\Delta$ at 255 215
\pinlabel $p_0$ [r] at 185 153
\pinlabel $p$ [l] at 322 65
\pinlabel $\beta$ [bl] at 327 224
\endlabellist
\centering
\includegraphics[scale=0.45]{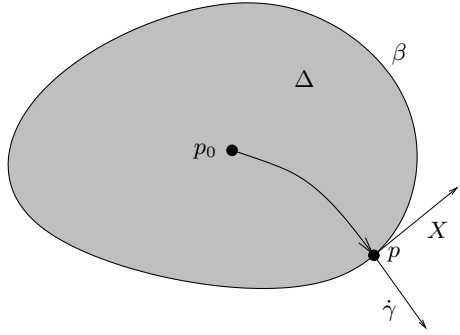}
  \caption{A smooth singular disc in~$B$.}
  \label{figure:discB}
\end{figure}

The horizontal lift $\beta_{\mathrm{h}}$ to~$M$ of $\beta$, starting
at some lift $\tilde{p}$ of~$p$, ends along the $S^1$-fibre
through $\tilde{p}$, with some holonomy shift $h$ mod~$2\pi$.
There is a lifted disc $\tilde{\Delta}$ bounded by
$\beta_{\mathrm{h}}$ and a segment of length $-h$ along
the $S^1$-fibre, see Figure~\ref{figure:discM}.

\begin{figure}[h]
\labellist
\small\hair 2pt
\pinlabel $\text{$S^1$-fibre}$ [l] at 122 415
\pinlabel $\tilde{p}$ [tl] at 167 89
\pinlabel $-h$ [r] at 155 130
\pinlabel $\tilde{\Delta}$ at 278 247
\pinlabel $\beta_{\mathrm{h}}$ [bl] at 322 292
\endlabellist
\centering
\includegraphics[scale=0.45]{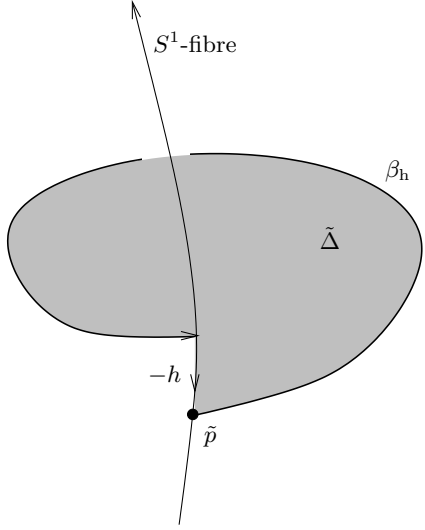}
  \caption{The lifted disc in~$M$.}
  \label{figure:discM}
\end{figure}

We then compute
\begin{eqnarray*}
\lefteqn{-h =\int_{\beta_{\mathrm{h}}}\alpha=\int_{\tilde{\Delta}}\rmd\alpha=
\int_{\Delta}\omega=-2\pi\int_{\gamma}i_X\omega=}\\
 & & 2\pi\int_{\gamma}\rmd H=2\pi\bigl(H(p)-H(p_0)\bigr)
=2\pi H(p)\;\;\text{mod $2\pi$}.
\end{eqnarray*}

Along the $S^1$-orbit $\beta$ of the Hamiltonian flow, the
value of $H$ is constant equal to~$H(p)$. It follows that
the $\tX$-orbit through $\tilde{p}$ closes up after time~$2\pi$.
\end{proof}

Of course the $S^1$-orbit of $\tX$ through $\tilde{p}$ may
be multiply covered. Observe that if $p$ is a fixed point
of the Hamiltonian $S^1$-action on~$B$, then $h=0$, so
our calculation shows that $H(p)$ is also a natural number.
So the $\tX$-orbit starting at a lift of such a fixed point
is simply a fibre of the Boothby--Wang bundle, usually
multiply covered.

\begin{rem}
This proposition is only the most simple case of much more general results
about the lifting of group actions to Boothby--Wang bundles
or prequantisation line bundles; see~\cite{mund01}, for instance.
\end{rem}

\begin{lem}
The flows of $\tX$ and $R$ commute and hence define a strict contact
$T^2$-action.
\end{lem}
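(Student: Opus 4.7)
The plan is to show that the Lie bracket $[R,\tX]$ vanishes, after which the statement follows formally.

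Writing $\tX=\tH R+\Xh$ and using the Leibniz rule, I would compute
\[
[R,\tX]=[R,\tH R]+[R,\Xh]=R(\tH)\,R+[R,\Xh].
\]
The first term vanishes because $R$ is tangent to the $S^1$-fibres of the Boothby--Wang bundle (so $T\pi\cdot R=0$), while $\tH=H\circ\pi$ is fibrewise constant; hence $R(\tH)=0$. For the second term, recall that $R$ is the infinitesimal generator of the principal $S^1$-action on $M$, and $\alpha$ is a connection $1$-form, so both the horizontality condition $\alpha(\Xh)=0$ and the projection condition $T\pi(\Xh)=X$ (with $X$ a vector field on the base) are preserved by the $S^1$-action. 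This makes $\Xh$ invariant under the Reeb flow, i.e.\ $[R,\Xh]=L_R\Xh=0$.

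Once $[R,\tX]=0$, the flows $\varphi_t^{R}$ and $\varphi_s^{\tX}$ commute, so the map $(s,t)\mapsto \varphi_s^{\tX}\circ\varphi_t^{R}$ defines an $\R^2$-action on~$M$. The Reeb flow on a Boothby--Wang bundle (with the normalisation $\pi^*\omega=\rmd\alpha$ and $e=-[\omega/2\pi]$) has minimal period~$2\pi$ along the fibres, and by the previous proposition the $\tX$-flow is $2\pi$-periodic. The $\R^2$-action therefore descends to a $T^2=\R^2/2\pi\Z^2$-action. Since $L_R\alpha=0$ and $L_{\tX}\alpha=0$ (the latter established earlier, the former being standard for the Reeb field), every element of this $T^2$-action preserves~$\alpha$, so it is a strict contact $T^2$-action.

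I do not anticipate a serious obstacle: the only point requiring care is the $S^1$-invariance of the horizontal lift, which I would justify either by the principal-bundle argument above, or by verifying directly that the vector field $\Xh$ characterised by $\alpha(\Xh)=0$ and $T\pi(\Xh)=X$ is uniquely determined and that $\varphi_t^{R*}\Xh$ satisfies the same two conditions.
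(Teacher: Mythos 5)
Your proposal is correct and follows essentially the same route as the paper: both arguments reduce to the observation that $\tH$ is $R$-invariant and that $\Xh$ is uniquely characterised by $R$-invariant conditions, hence $[R,\Xh]=0$. The only cosmetic difference is that you invoke the defining pair $\alpha(\Xh)=0$, $T\pi(\Xh)=X$ together with invariance of the bundle projection, whereas the paper rewrites the second condition as $i_{\Xh}\rmd\alpha=-\rmd\tH$ and uses $R$-invariance of $\alpha$ and $\rmd\alpha$; your extra remarks on the periodicity needed to descend from an $\R^2$- to a $T^2$-action are a welcome addition.
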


\begin{proof}
The function $\tH$ on $M$ is $R$-invariant by construction.
The defining equations for $\Xh$ can be written as $\alpha(\Xh)=0$
and $i_{\Xh}\rmd\alpha=-\rmd\tH$. Since the forms $\alpha$ and
$\rmd\alpha$ are $R$-invariant, so are these defining equations.
It follows that $\tX$ is $R$-invariant, i.e.\ $[R,\Xh]=L_R\Xh=0$.
\end{proof}

It follows that if the $S^1$-action on $B$ has finitely many fixed
points, the flow of $\tX+\varepsilon R$, for $\varepsilon\in
\R^+\setminus\Q$, has finitely many periodic orbits, viz., the
fibres of $M\rightarrow B$ over the fixed points.
By Example~\ref{ex:rescale}, this vector field $\tX+\varepsilon R$
is the Reeb vector field of a suitably rescaled~$\alpha$.
\subsection{Hamiltonian $S^1$-actions with finitely many fixed points}
\label{section:Ham-finite}
In order to prove Theorem~\ref{thm:agz1}, all that remains to be done
is exhibit Hamiltonian $S^1$-actions with large finite numbers of fixed
points (and any number $\geq 3$ in dimension four).
On $\CP^n$ with its standard Fubini--Study symplectic form, we
consider the Hamiltonian function
\[ H\co [z_0:\ldots:z_n]\longmapsto\frac{1}{2}\,\frac{w_1|z_1|^2+
\cdots+w_n|z_n|^2}{|z_0|^2+\cdots+|z_n|^2},\]
where $w_1,\ldots,w_n$ are pairwise distinct integers of greatest
common divisor~$1$. This generates the effective Hamiltonian
$S^1$-action
\[ \rme^{\rmi\varphi}[z_0:\ldots:z_n]=\bigl[z_0:\rme^{\rmi w_1\varphi}:
\ldots:\rme^{\rmi w_n\varphi}\bigr] \]
with precisely $n+1$ fixed points
\[ [1:0:\ldots:0],\ldots,[0:\ldots:0:1].\]

\begin{rem}
A Hamiltonian function defining a Hamiltonian $S^1$-action
with isolated fixed points on some closed symplectic manifold
$(W,\omega)$ is always a (perfect) Morse
function~\cite[Section~32]{gust84}. Since $[\omega^k]$
is a non-trivial generator of $H^{2k}(W)$ for $0\leq k\leq n$,
the minimal number of fixed points of a Hamiltonian $S^1$-action equals
$n+1$. Thus, the construction we described never yields less than
$n+1$ periodic Reeb orbits on a contact manifold of dimension $2n+1$.
This may or may not be regarded as evidence for the conjecture
mentioned in Section~\ref{subsection:WC}.
\end{rem}

The $S^1$-action we described extends to a Hamiltonian $S^1$-action on
the $a$-fold symplectic blow-up $\CP^n\#_a\overline{\CP}^n$ with
$n+1+a(n-1)$ fixed points.
\section{Embedding surface diffeomorphisms into Reeb flows}
\label{section:embedding}
In this section I describe how contact cuts can be used to
construct Reeb flows with a global surface of section
and a given area-preserving diffeomorphism of this surface
as Poincar\'e return map.
\subsection{Global surfaces of section}
Let $X$ be a non-singular vector field on a $3$-manifold $M$.
A \emph{global surface of section} for the flow of $X$,
see Figure~\ref{figure:surf-section}, is
an embedded compact surface $\Sigma$ with boundary such that
\begin{enumerate}[widest=(iii)]
\item[(i)] the boundary $\partial\Sigma$ is a union of orbits;
\item[(ii)] the interior $\Int(\Sigma)$ is transverse to~$X$;
\item[(iii)] the orbit of $X$ through any point in $M\setminus\partial\Sigma$
intersects $\Int(\Sigma)$ in forward and backward time.
\end{enumerate}

\begin{figure}[h]
\centering
\labellist
\small\hair 2pt
\pinlabel $p$ [r] at 214 196
\pinlabel $\psi(p)$ [l] at 226 269
\endlabellist
\includegraphics[scale=0.4]{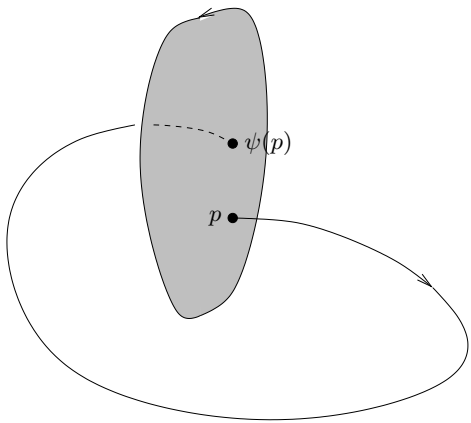}
  \caption{A disc-like global surface of section.}
  \label{figure:surf-section}
\end{figure}

In general, such a surface of section does not exist. For instance,
the aperiodic flow on $S^3$ constructed by Kuperberg does not admit
a global surface of section: this surface would have to be closed
and orientable (thanks to the transverse vector field), but such a surface
separates $S^3$ and cannot satisfy~(iii).

Global surfaces of section were introduced by Poincar\'e in his study
of the $3$-body problem. Given a surface of section~$\Sigma$,
the dynamics of the system is reduced to studying the return map on
the surface, i.e.\ the map $\psi$ that sends each point $p\in\Sigma$
to the first intersection point $\psi(p)\in\Sigma$ of the
orbit through $p$ in forward time.
For instance, closed orbits of the original system correspond
to periodic points of the discrete dynamics on the surface.

Hofer, Wysocki and Zehnder~\cite{hwz98,hwz03} developed holomorphic
curves techniques for finding global surfaces of sections for
Hamiltonian and Reeb flows.
An application is a new proof of the existence of infinitely many
closed geodesics for any Riemannian metric on~$S^2$.

Even for Reeb flows, however, the existence of a surface of section
is not guaranteed. Hryniewicz, Momin and Salom\~ao~\cite{hms15}
describe an example of a Reeb flow on $S^3$ with a Reeb Hopf link,
neither component of which spans a global surface of section.

\begin{ex}
The flow of the Reeb vector field $\partial_{\varphi_1}+(1+\varepsilon)
\partial_{\varphi_2}$ on $S^3$ described in Example~\ref{ex:xist}
has a global surface of section: the disc made up of the
closed right half-plane and the point at infinity.

If we think of $\varphi_1$ as the angle about the vertical axis,
and of the $\varphi_2$-direction as the one along the axis
or along the circles representing the Hopf tori, the return map
on the disc becomes a rotation through an angle~$2\pi\varepsilon$.
\end{ex}
\subsection{Pseudorotations}
\begin{defn}
An \emph{irrational pseudorotation} is a diffeomorphism $\psi$ of $D^2$
such that
\begin{enumerate}[widest=(ii)]
\item[(i)] $\psi$ is area-preserving;
\item[(ii)] $\psi$ has $0\in D^2$ as a fixed point and no other
periodic point.
\end{enumerate}
\end{defn}

Fayad and Katok~\cite{faka04} constructed such pseudorotations as
$C^{\infty}$-limits
\[ \lim_{\nu\rightarrow\infty}\phi_{\nu}\circ\frakR_{p_{\nu}/q_{\nu}}\circ
\phi_{\nu}^{-1},\]
where the $\frakR_{p_{\nu}/q_{\nu}}$ are $2\pi$-rational rotations of
$D^2$ approximating an irrational rotation, and the conjugating
maps $\phi_{\nu}$ are area-preserving diffeomorphisms of~$D^2$,
equal to the identity on a small and, for $\nu\rightarrow\infty$,
shrinking neighbourhood of~$\partial D^2$.

The complexity of these Fayad--Katok pseudorotations is expressed
by the fact that they only admit three ergodic invariant measures:
the Lebesgue measure on the disc or the boundary, and the
$\delta$-measure at~$0$. Even so, there are many non-dense orbits,
so these pseudorotations (and the Reeb flows we are going to
construct with their help) are not minimal. On the issue
of minimality of Hamiltonian and Reeb flows see \cite{fish15}
and~\cite{geze18}.
\subsection{Embedding into Reeb flows}
The theorem I would like to advertise here is the following.
In fact, the main result in~\cite{agz18b} is quite a bit more general.

\begin{thm}[Albers--Geiges--Zehmisch]
\label{thm:agz2}
There is a contact form on $S^3$, inducing the standard contact
structure, whose Reeb flow has a disc-like surface of section on
which the return map equals a given Fayad--Katok pseudorotation.
\end{thm}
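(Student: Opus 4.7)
My plan is to realise $\psi$ as the time-$1$ Poincar\'e map of a suspension Reeb flow on an open solid torus $M_\psi$ carrying an explicit contact form, and then close $M_\psi$ up to a copy of $S^3$ via a contact cut along its boundary torus, in the spirit of Example~\ref{ex:S3-cut}. Since $\psi$ is a genuine smooth area-preserving diffeomorphism, no limiting process is needed at the geometric level; the intricate Fayad--Katok dynamics enters only through the choice of generating Hamiltonian.

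\textbf{Step 1 (Hamiltonian generating function).} Because $\psi$ is area-preserving and equal to the identity near $\partial D^2$, it is isotopic to the identity through area-preserving diffeomorphisms of $D^2$ supported in $\Int(D^2)$. Since $\Int(D^2)$ is simply connected, that symplectic isotopy is Hamiltonian, so I may write $\psi=\phi^H_1$ for a smooth compactly supported $H_t\co\Int(D^2)\to\R$, $t\in[0,1]$. Set $\lambda_0:=\tfrac{1}{2}(x\,\rmd y-y\,\rmd x)$, so that $\rmd\lambda_0$ is the standard area form $\omega$.

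\textbf{Step 2 (suspension contact form).} On $D^2\times[0,1]$ I take
\[
\beta:=\varepsilon\,\rmd t+(\phi^H_t)^*\lambda_0
\]
for a small parameter $\varepsilon>0$ ensuring $\beta\wedge\rmd\beta\neq 0$; its Reeb vector field is then a positive multiple of $\partial_t+X_{H_t}$. At the two ends $\beta$ restricts to $\lambda_0$ at $t=0$ and to $\psi^*\lambda_0$ at $t=1$, so the identification $(x,1)\sim(\psi(x),0)$ preserves $\beta$, and the form descends to a contact form on the open mapping torus
\[
M_\psi:=\bigl(\Int(D^2)\times[0,1]\bigr)\big/(x,1)\sim(\psi(x),0),
\]
an open solid torus whose end is a Clifford-like torus $\partial D^2\times S^1$ on which $\beta$ is the standard product contact form (since $H_t$ vanishes there). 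By construction, the time-$1$ Poincar\'e return map of the Reeb flow on the disc $D^2\times\{0\}$ is precisely $\psi$.

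\textbf{Step 3 (contact cut to $S^3$).} I extend $\beta$ across a collar of the end by the product model, arrange rotation in the $[0,1]/\!\sim$ factor (the $S^1$-direction of the mapping torus) to be a strict contact $S^1$-action, and set up the momentum map $\mu$ to vanish exactly along the original end. The contact cut of Section~\ref{subsection:contactcut} collapses the $S^1$-orbits on the end torus to a single binding circle and yields a closed contact $3$-manifold $(N,\alpha_N)$. The coordinate formula of Example~\ref{ex:S3-cut} identifies $N$ with $S^3$, and property~(i) of the cut shows that the complement of the binding is strictly contactomorphic to the interior of $M_\psi$ with its suspension Reeb flow. Consequently the disc $D^2\times\{0\}$ (together with the binding as its boundary) becomes a disc-like global surface of section for the Reeb flow of $\alpha_N$ with return map~$\psi$. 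Finally, since the construction manifestly provides a Liouville filling of the symplectisation end, Eliashberg's classification of tight contact structures on $S^3$ ensures $\ker\alpha_N$ is isotopic to $\xist$, and an ambient isotopy turns $\alpha_N$ into a contact form defining~$\xist$.

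\textbf{Main obstacle.} The hard part is Step~3: one has to match $\beta$ to the rotation-invariant collar model precisely enough that the contact cut produces a smooth closed contact form on $S^3$, and one has to verify that the cut contact structure is genuinely tight, so that it may be identified with $\xist$. Because $H_t$ is neither small nor tame in the Fayad--Katok case, the smallness of $\varepsilon$ must be globally compatible with a contact condition that survives all the way to the boundary collar; this robust contact-cut argument is where I expect the main technical work of~\cite{agz18b} to lie.
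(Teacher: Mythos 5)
Your Steps 2 and 3 are in outline the same as the paper's: build a suspension contact form on a solid torus whose Reeb field is proportional to $\partial_s+X_s$, so that the disc is a surface of section with return map $\psi$, and then close up to $S^3$ by a contact cut as in Example~\ref{ex:S3-cut}. But Step 1 rests on a false premise that makes the whole argument collapse. You assert that a Fayad--Katok pseudorotation $\psi$ is ``equal to the identity near $\partial D^2$'' and conclude that ``no limiting process is needed at the geometric level.'' A pseudorotation has $0$ as its \emph{only} periodic point, so it cannot equal the identity on any open set; in fact $\psi$ restricts to an irrational rotation on $\partial D^2$, and it is not even a rigid rotation on any neighbourhood of the boundary, because the conjugating maps $\phi_{\nu}$ are the identity only on neighbourhoods of $\partial D^2$ that \emph{shrink} as $\nu\rightarrow\infty$. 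Only the finite-stage approximants $\phi_{\nu}\circ\frakR_{p_{\nu}/q_{\nu}}\circ\phi_{\nu}^{-1}$ are rigid (rational) rotations near the boundary, and it is exactly this property that furnishes the strict contact $S^1$-action tangent to $\ker\alpha$ near $\partial(S^1\times D^2)$ needed to perform the cut.

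Consequently the ``main obstacle'' you defer to Step 3 is misidentified: the difficulty is not tuning a parameter $\varepsilon$ or verifying tightness, but the fact that for the limit diffeomorphism $\psi$ there is no obvious invariant collar on which the required $S^1$-symmetry exists. The paper is explicit that the crucial step is to pass to the limit $\nu\rightarrow\infty$ in the cut construction, or more generally to isolate a boundary behaviour of $\psi$ that still permits the boundary quotient; your proposal removes this issue by assumption rather than addressing it. A secondary point: your appeal to Eliashberg's classification to identify $\ker\alpha_N$ with $\xist$ requires first proving tightness (e.g.\ via fillability of the cut manifold), which you assert but do not establish; in the intended construction the identification with the standard structure comes out of the explicit cut model.
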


Given a diffeomorphism $\psi\co D^2\rightarrow D^2$, one can write
it as the time-$2\pi$ map of a $2\pi$-periodic Hamiltonian
$H_s\co D^2\rightarrow\R$, $s\in\R/2\pi\Z$. In other words,
$\psi$ is the time-$2\pi$ map of the flow of~$X_s$, the
time-dependent Hamiltonian vector field defined by~$H_s$.
As symplectic form on $D^2$ we take $\omega=2r\,\rmd r\wedge\rmd\varphi$.
Then
\[ \alpha:=H_s\,\rmd s+r^2\,\rmd\varphi\]
is a contact form for $H_s$ sufficiently large. Since we can
always add a constant to $H_s$ without changing $X_s$, this
condition on $H_s$ is no restriction. The Reeb vector field
of $\alpha$ is proportional to $\partial_s+X_s$. This means that we have
found a contact form on $S^1\times D^2$ whose Reeb flow is transverse
to the $D^2$-factor, with return map the given~$\psi$.
This observation has been used previously in~\cite{abhs18}.

Theorem~\ref{thm:agz2} now follows by realising $S^3$ as a suitable
contact cut of $(S^1\times D^2,\alpha)$, cf.~Example~\ref{ex:S3-cut}.
Observe that the $\phi_{\nu}\circ\frakR_{p_{\nu}/q_{\nu}}\circ
\phi_{\nu}^{-1}$ are rigid rotations near the boundary of $D^2$,
which implies that there is a strict contact $S^1$-action near
the boundary $\partial(S^1\times D^2)$, tangent to $\ker\alpha$. This allows
one to perform a contact cut, and the observation about strict
contact embeddings we made in Section~\ref{subsection:contactcut}
gives one control over the Reeb dynamics on~$S^3$.

The crucial step, then, is to show that one may pass to the limit
$\nu\rightarrow\infty$ in this construction or, more generally,
to describe a boundary behaviour of $\psi$ that still allows one
to perform a boundary quotient on $S^1\times D^2$. This second approach
allows one to extend the construction to surfaces other than~$D^2$.
\begin{ack}
I would like to thank Gianluca Bande and Beniamino Cap\-pelletti-Montano
for the invitation to present a mini-course of five lectures
during the Cagliari conference.
The perceptive comments of the audience have helped to improve the
presentation in these lecture notes. I thank Sebastian Durst for
comments on a draft version of these notes.
The author is partially supported by the SFB/TRR 191 `Symplectic Structures
in Geometry, Algebra and Dynamics', funded by the Deutsche
Forschungsgemeinschaft.
\end{ack}

\end{document}